
\RequirePackage{ifthen}
\newboolean{springer}
\setboolean{springer}{false}

\ifthenelse {\boolean{springer}}
{
\documentclass{svjour3}
\smartqed
\usepackage[margin=1.3in]{geometry}
} {
\documentclass[11pt]{article}
\usepackage[margin=1in]{geometry}
}

\usepackage{CJKutf8}
\usepackage{amsmath}
\usepackage{graphicx}
\usepackage{psfrag}
\usepackage{amssymb}
\usepackage{amsmath}
\usepackage{verbatim,booktabs}
\usepackage{subfigure,epsfig,url}
\usepackage{float}
\usepackage{mathrsfs}
\usepackage{enumitem} 
\usepackage{caption}

\usepackage[usenames,dvipsnames]{pstricks}
\usepackage{epsfig}
\usepackage{pst-grad} 
\usepackage{pst-plot} 
\usepackage{xspace}
\usepackage{bbm}
\usepackage{comment}
\usepackage{tikz}
\usepackage{accents}
\usepackage{multirow}
\usepackage{array}
\usepackage{longtable}
\usepackage{pdflscape}
\usepackage[ruled,vlined,linesnumbered,noresetcount]{algorithm2e}
\usepackage{hyperref}

\makeatletter
\let\cl@chapter\undefined
\makeatletter
\usepackage{cleveref}


\ifthenelse {\boolean{springer}}
{

\newenvironment{prf}
{\begin{trivlist} \item[] {\em Proof }}
{$\hfill\qed$ \end{trivlist}}

\newenvironment{prfc}[1][]
{\begin{trivlist} \item[] {\em Proof #1 }}
{$\hfill\qed$ \end{trivlist}}


\newcounter{claim} 
\renewenvironment{claim}[1][]
{\refstepcounter{claim} \begin{trivlist} \item[] {\bf Claim~\theclaim}\space#1 \itshape}
{\end{trivlist}}

\newenvironment{cpf}
{\begin{trivlist} \item[] {\em Proof of claim }}
{$\hfill\diamond$ \end{trivlist}}

\newenvironment{spf}
{\begin{trivlist} \item[] {\em Proof of step }}
{$\hfill\triangleleft$ \end{trivlist}}

\journalname{Journal name}



} {

\usepackage{amsthm}
\newtheorem{theorem}{Theorem}
\newtheorem{proposition}{Proposition}
\newtheorem{lemma}{Lemma}
\newtheorem{corollary}{Corollary}

\newtheorem{definition}{Definition}

\newenvironment{prf}[1][]
{\begin{proof}}
{\end{proof}}



%


}

\def\ie{{i.e.,} }
\def\eg{{e.g., }}
\newcommand{\retal} {{et al.\/} }

\newcommand{\orgdiv}[1]{#1}%
\newcommand{\orgname}[1]{#1}%
\newcommand{\orgaddress}[1]{#1}%
\newcommand{\postcode}[1]{#1}%
\newcommand{\city}[1]{#1}%
\newcommand{\country}[1]{#1}%

\newcommand{\fnm}[1]{#1}%
\newcommand{\sur}[1]{#1}%

\newcolumntype{L}{>{$}l<{$}}
\newcolumntype{C}{>{$}c<{$}}
\newcolumntype{R}{>{$}r<{$}}

\newcommand{\bR}{\mathbb{R}}

\newcommand{\cB}{{\mathcal B}}

\newcommand{\cC}{{\mathcal C}}
\newcommand{\cK}{{\mathcal K}}
\newcommand{\cD}{{\mathcal{D}}}

\newcommand{\sF}{{\mathsf F}}
\newcommand{\cP}{{\mathcal P}}
\newcommand{\cR}{{\mathcal R}}

\newcommand{\bS}{{\mathbb S}}

\newcommand{\cS}{{\mathcal S}}

\newcommand{\cX}{{\mathcal X}}

\newcommand{\norm}[1]{{\lVert#1\rVert}}

\renewcommand{\t}[1]{{#1}^\top}
\newcommand{\relx}[1]{\tilde{#1}}

\SetKw{Break}{break}

\DeclareMathOperator{\epi}{epi}
\DeclareMathOperator{\hyp}{hypo}
\DeclareMathOperator{\gra}{gra}
\DeclareMathOperator{\conv}{conv}

\DeclareMathOperator{\bd}{bd}
\DeclareMathOperator{\ext}{ext}

\DeclareMathOperator{\argmax}{argmax}

\newcommand{\maxcut}{\textsc{max cut}\xspace}
\newcommand{\pbm}{\textsc{pseudo Boolean maximization}\xspace}
\newcommand{\bdopt}{\textsc{Bayesian D-optimal design}\xspace}
\newcommand{\dopt}{\textsc{D-optimal design}\xspace}

\DeclareMathOperator{\inter}{int}
\DeclareMathOperator{\relint}{relint}
\DeclareMathOperator{\relbd}{relbd}
\DeclareMathOperator{\conve}{env}

\DeclareMathOperator{\ldet}{ldet}
\newcommand{\cha}[1]{ \mathsf{supp}(#1)}

\newcommand{\ep}[1]{ \ensuremath{EPM_{#1}} }
\newcommand{\ee}[1]{\ensuremath{EE_{#1} }}

\newcommand{\leo}[1]{{\color{red}#1}}

\Crefname{chapter}{Chap.}{Chaps.}
\Crefname{section}{Sect.}{Sects.}
\Crefname{proposition}{Prop.}{Props.}
\Crefname{theorem}{Thm.}{Thms.}
\Crefname{definition}{Defn.}{Defns.}
\Crefname{corollary}{Cor.}{Cors.}
\Crefname{figure}{Fig.}{Figs.}
\Crefname{observation}{Obs}{Obss.}

\SetKwComment{Comment}{$\triangleright$\ }{}
\SetCommentSty{}
\SetKw{Continue}{continue}
\SetKw{Break}{break}

\allowdisplaybreaks

\newcommand{\bi}{\begin{list}{$\bullet$}{\setlength{\parsep}{0pt}\setlength{\itemsep}{0pt}}}



\title{Submodular maximization and its generalization through an intersection cut lens}

\ifthenelse {\boolean{springer}}
{
\titlerunning{Intersection Cuts for Submodular Maximization and Generalizations}
\authorrunning{Xu \retal}

\author{\fnm{Liding} \sur{Xu} \and  \fnm{Leo} \sur{Liberti} }


\institute{
  \fnm{Liding} \sur{Xu},  \fnm{Leo} \sur{Liberti}\at
  \orgdiv{LIX CNRS}, \orgname{\'{E}cole Polytechnique, Institut Polytechnique de Paris}, \orgaddress{\city{Palaiseau}, \postcode{91128}, \country{France}} \\
 \email{liding.xu@polytechnique.edu, liberti@lix.polytechnique.fr}
}

}
{
\author{
\fnm{Liding} \sur{Xu} \thanks{ \orgdiv{LIX CNRS}, \orgname{\'Ecole Polytechnique, Institut Polytechnique de Paris}, \orgaddress{\city{Palaiseau}, \postcode{91128}, \country{France}}.
    		 E-mail: {\tt liding.xu@polytechnique.edu,  liberti@lix.polytechnique.fr}}
    		 \and
\fnm{Leo} \sur{Liberti} \footnotemark[1]
}
}

\date{\today}

\begin{document}

\maketitle

\date{}

\begin{abstract}
We study a mixed-integer set $\cS:=\{(x,t) \in \{0,1\}^n \times \bR: f(x) \ge t\}$ arising in the submodular maximization problem, where $f$ is a submodular function defined over $\{0,1\}^n$. We use intersection cuts to tighten a polyhedral outer approximation of $\cS$. We construct  a continuous extension  $\sF$  of  $f$, which is convex and  defined over the entire space $\bR^n$. We show that the epigraph  $\epi(\sF)$ of  $\sF$  is an $\cS$-free set, and characterize maximal $\cS$-free sets including $\epi(\sF)$.  We propose a hybrid discrete Newton algorithm to compute an intersection cut efficiently and exactly. Our results are generalized to  the hypograph or the superlevel set of a submodular-supermodular function, which is a model for discrete nonconvexity. A consequence of these results is intersection cuts for Boolean multilinear constraints. We evaluate our techniques on  max cut, pseudo Boolean  maximization, and Bayesian D-optimal design problems within a MIP solver.

\ifthenelse {\boolean{springer}}
{
\keywords{MINLP \and submodular maximization \and submodular-supermodular functions \and intersection cuts \and Boolean multilinear functions \and D-optimal design}
\subclass{MSC 90C10 \and 90C26 \and 90C57}
} {}
\end{abstract}

\ifthenelse {\boolean{springer}}
{}{
\emph{Keywords:} MINLP, submodular maximization, submodular-supermodular functions, intersection cuts, Boolean multilinear functions,  D-optimal design.
}

\section{Introduction}
In this paper, we consider the submodular maximization problem:
\begin{equation}
\label{eq.milp}
    \max_{t \in \bR} \, t \quad \textup{s.t.}  \quad f(x) \ge t, \quad x  \in \{0,1\}^n \cap \cX.   
\end{equation}
where $f:\{0,1\}^n \to \bR$ is a submodular function and $\cX \subseteq \bR^n$ is  a set describing additional constraints.  We study valid inequalities for the mixed-integer set $\hyp_{\{0,1\}^n}(f):=\{(x,t) \in \{0,1\}^n \times \bR: f(x) \ge t\}$, which is the hypograph of $f$ over the Boolean hypercube $\{0,1\}^n$.

The maximization of arbitrary submodular functions (i.e., Eq.~\eqref{eq.milp}) can be reduced to a Mixed-Integer Linear Program (MILP) with exponentially many linear inequalities \cite{nemhauser1978analysis}.   No polynomial-time algorithm is yet known to separate these inequalities. The Benders-like exact approach based on a branch-and-cut algorithm proposed in  \cite{coniglio2022submodular} provides  global dual bounds for primal solutions, and achieves a finite convergence rate.

 Many submodular maximization problems  (\eg max cut with positive edge weights \cite{schrijver2003combinatorial}, D-optimal design \cite{sagnol2015computing}, and utility maximization \cite{ahmed2011maximizing}) have natural MILP or mixed-integer nonlinear programming (MINLP) formulations, which can be solved using general-purpose global optimization solvers. The algorithm underlying these solvers is typically a branch-and-cut algorithm, which uses polyhedral outer approximations to construct LP relaxations \cite{bestuzheva2021scip,bestuzheva2023global,tawarmalani2005polyhedral}. For submodular maximization problems with  convex MINLP formulations,  a state-of-art algorithm also  uses  polyhedral outer approximations \cite{coey2020outer}.
 
Intersection cuts can be used to strengthen polyhedral outer approximations of a nonconvex set $\cS$ that is considered hard to optimize over. The construction of intersection cuts \cite{conforti2014} requires two key ingredients: a corner polyhedron relaxation of $\cS$, and an $\cS$-free set, which is a convex set that does not contain any interior point of $\cS$. (Inclusion-wise) maximal $\cS$-free sets generate strong intersection cuts  not dominated by other intersection cuts.

Intersection cuts were initially devised in the continuous setting (the papers \cite{tuy64,thy1985}, cited in \cite[Ch.~III]{horsttuy}, appeared before the classic paper \cite{balas1971intersection}), where they could approximate the hypograph $\cS$ of a convex function over a polytope.   There is a  unique  maximal  $\cS$-free set: the epigraph  of that convex function. Later, intersection cuts were used in the discrete setting \cite{balas1971intersection}, where $\cS$ is  a lattice. Several more families of lattice-free sets ({\eg}~splits, triangles, and spheres \cite{conforti2014,liberti2008spherical}) were described later.

The submodular maximization problem plays an intermediate role between these settings. On the one hand, the submodular function $f$ is defined over the Boolean hypercube $\{0,1\}^n$. Therefore,  the graph of $f$  projected on $\bR^n$ is  a subset of a lattice. On the other hand, as a discrete  analogue to convex functions, $f$ has a convex (thus continuous) extension over the hypercube $[0,1]^n$, namely the  Lovász extension \cite{lovasz1983submodular}. We can extend the Lovász extension to  a convex function, which we call $\sF$, over the entire $n$-dimensional Euclidean space $\bR^n$. This (continuous) function $\sF$ inherits a rich combinatorial structure from $f$.

The difference of two submodular functions (call them $f_1,f_2$) is a \textit{submodular-supermodular} (SS) function. SS functions generalize submodular functions, which are also discrete  analogues of  difference-of-convex (DC) functions. The epigraphs/superlevel sets of DC functions can represent various nonconvex sets, \eg quadratic sets \cite{munoz2022maximal} and signomial-term sets \cite{xusignomial}. This representation facilitates the derivation of intersection cuts \cite{serrano2019}. In fact,  SS functions may represent some discrete nonconvex functions arising in combinatorial optimization. For example, we will show that any Boolean multilinear function is an SS function.

Let $\cS=\hyp_{\{0,1\}^n}(f)$.  In this paper, we use convex extensions in order to construct some $\cS$-free (namely, hypograph-free) sets with Boolean structure. The hypograph set $\hyp_{\{0,1\}^n}(f)$ is a special case of the constraint set $\cS:=\{(x, t) \in \{0,1\}^n \times \bR: f_1(x) -f_2(x)  \ge \ell t\}$
with $ \ell \in \{0,1\}$. We also aim at extending our results to this more general set $\cS$. Finally, we propose an efficient algorithm to compute intersection cuts derived from $\cS$-free sets. To the best of our knowledge, intersection cuts have not  been applied directly to approximate problems with submodular and/or supermodular structures.

We implement intersection cuts within the \texttt{SCIP} solver \cite{bestuzheva2021scip} and test them on \maxcut,  \pbm, and  \bdopt problems. We show the strengths and weaknesses of intersection cuts under these different settings.

\subsection{Contributions}
Our primary contribution is the construction of hypograph-free sets. We show that a maximally hypograph-free set $\cC \times \bR$ can be lifted from  a maximal $\{0,1\}^n$-free set.  We also give an alternative construction of hypograph-free sets by exploiting  the submodularity. We relate the analytical properties of  $\sF$ to its combinatorial properties, which inherit those of the Lovász extension. We show that the epigraph $\epi(\sF)$ of $\sF$ is a hypograph-free set,  larger than the epigraph of the Lovász extension. However, unlike in the continuous setting, $\epi(\sF)$ is not maximally hypograph-free. We give necessary and sufficient conditions on maximal  hypograph-free sets that include $\epi(\sF)$.

The second contribution is  the computation of intersection cuts. We reduce the intersection cut separation problem to solving univariate nonlinear equations, which we achieve by a hybrid discrete Newton algorithm like \cite{Goemans}. We show that facets of $\epi(\sF)$ can be separated in strongly polynomial time. This implies that the (sub)-gradients required by the  Newton algorithm can be computed in a strongly polynomial time. The hybrid discrete Newton algorithm finds a zero point of a univariate nonlinear equation  in a finite number of steps. By contrast, the conventional bisection algorithm  only guarantees $\epsilon$-approximated solutions for $\epsilon > 0$.

Lastly, we  extend the previous findings to constraint sets involving an SS function. We show that any Boolean multilinear function is an SS function. This result yields intersection cuts for multilinear constraints in binary variables.

\subsection{Literature review}
  The work of Jack Edmonds \cite{edmonds2003submodular} plays a prominent role in the study of the combinatorial properties of submodular functions. We refer to \cite{schrijver2003combinatorial} for basic concepts and definitions. The convex envelope of a submodular function $f$ is its Lovász extension  \cite{Atamturk2021,lovasz1983submodular}. Submodular functions are a subclass of discrete convex functions, and we refer to \cite{murota1998discrete} for  more details about discrete convex analysis.

Valid inequalities for the hypographs of general submodular functions are called the \textit{base inequalities} \cite{nemhauser1978analysis}. For a class of special submodular functions, lifting procedures  \cite{ahmed2011maximizing,shi2022sequence}  can strengthen the base inequalities. The  base inequalities can be separated either using heuristics \cite{ahmed2011maximizing} or at integer points in a Benders-like framework \cite{coniglio2022submodular}. The method defined in \cite{Atamturk2021} combines valid inequalities for the submodular and supermodular components of an SS function. We refer to \cite{atamturk2020submodularity,atamturk2022supermodularity2,billionnet1985maximizing,bouhtou2010submodularity,han2022fractional,kilincc2021joint,rhys1970selection,shamaiah2010greedy,xusignomial,yu2023strong} for more details about the exploitation of  submodular/supermodular functions in mathematical programs. Supermodular polynomials in binary variables are defined and studied in \cite{billionnet1985maximizing,rhys1970selection}. The submodularity of the \dopt problem is exploited in \cite{SAGNOL2013258,shamaiah2010greedy}.

As mentioned above, intersection cuts generate valid inequalities for sets that are hard to optimize over. Gomory introduced the corner polyhedron \cite{gomory1969some}, and his celebrated mixed-integer cuts \cite{gomory1963algorithm} are special intersection cuts derived from split disjunctions \cite{Nemhauser1988}. The definition of intersection cuts for arbitrary set $\cS$ is due to \cite{dey2008,glover1973}. We refer to  \cite{andersen2010,andersen2007,basu2010,basu2019,conforti2015,conforti2011,cornuejols2015sufficiency,del2012relaxations,dey2008,richard2010group} for a more in-depth analysis.  The method given in \cite{towle2021intersection} can generate valid inequalities that cut off points outside $\cS$-free sets. We refer to \cite{andersen2007,belotti2015conic,klnc-karzan2015,kilinc-karzan2016,modaresi2015,modaresi2016} for relevant recent developments in mixed-integer conic programming.

For the cases where the nonconvexity of $\cS$ is not just due to integer variables, we refer to \cite{fischetti2018} for bilevel programs, \cite{bienstock2020outer} for outer-product sets, \cite{munoz2022maximal,munoz2022towards} for quadratic constraint sets, \cite{xusignomial} for signomial-term sets, and \cite{fischetti2020} for bilinear sets. The method given in \cite{serrano2019} constructs intersection cuts  for sets arising from factorable programs that contain DC functions \cite{khamisov1999optimization}.

Next, we discuss valid inequalities for polynomial programming, because we use polynomial programs in binary variables as a benchmark in our computational study. In \cite{bienstock2020outer}, intersection cuts approximate a nonconvex lifted set, namely the outer product set arising from  the extended formulation of a polynomial program. Lifted sets link decision variables to auxiliary variables representing (graphs of) monomials up to a given degree. We remark that in most combinatorial optimization problems,  decision variables are binaries. The polynomial program of interest is then a Boolean Multilinear Program (BMP).  The corresponding lifted set is the \textit{Boolean multilinear set} \cite{crama1993concave,fortet1960applications}, the convex hull of which is the so-called Boolean multilinear polytope. Valid inequalities for the Boolean multilinear polytope may be stronger than those for the  convex  hull of the outer product set. Various Gomory-Chvátal-based inequalities \cite{del2017polyhedral,del2018multilinear,del2020impact,del2022simple} are valid for the multilinear polytope. The separation and strength of these inequalities depend on  the hypergraph representing the underlying sparsity pattern of the multilinear set.

 We consider a constrained polynomial program, and assume that some  of its constraints are neither integrality constraints nor variable bound constraints. After lifting,  those constraints are linear constraints and  thus define a convex set $\cS_1$. The lifted set $\cS_2$ is nonconvex, and $\cS_1 \not\subseteq \cS_2$.  The polynomial program is then equivalent to linear optimization over $\conv(\cS_1 \cap \cS_2)$. However, in general, $\conv(\cS_1 \cap \cS_2) \ne \cS_1 \cap \conv(\cS_2)$, so the convexification of the lifted set may not yield an equivalent convex problem. To address this issue, one attempt is to directly consider $\conv(\cS_1 \cap \cS_2)$ and generate valid inequalities for it.  Some work in this sense exists for certain interesting special cases, e.g.~the intersection of multilinear sets with additional constraint sets such as cardinality constraints \cite{chen2023multilinear}. Another attempt is to consider  constraints in projected formulations, \eg in mixed-integer quadratically constrained quadratic programs \cite{saxena2011convex}. Since the  representation complexity  of the projected formulation  is smaller than that of the  extended formulation, this approach is also amenable to computation. In \cite{chmiela2022implementation,munoz2022maximal}, intersection cuts for the set defined by a quadratic constraint are derived. If additionally, the nonbasic variables of the LP relaxation are integer, the monoidal technique \cite{ChmielaMunozSerrano2023} can strengthen such intersection cuts.


However, generating valid inequalities for Boolean multilinear constraints, and, more generally, constructing $\cS$-free sets for nonlinear constraints on discrete variables, remain problems of considerable interest. In this paper, we look at these questions through a ``submodularity lens''.

\subsection{Notation}
 We let $ [n]:= \{1,\cdots,n\}$ for any positive integer $n$. We denote $\cB := \{0,1\}^n$, $\bar{\cB}: = [0,1]^n$. We assume that $[n]$ is equipped with the natural number order.
 $\mathbf{1}$ denotes the all-one vector, and $\mathbf{0}$ denotes the all-zero vector. For $S \subseteq [n]$, we denote by $\cha{S} \in \cB$ the characteristic vector of $S$. For vectors $a,b$, we let $(a,b)$ be their concatenation, and extend this notation naturally to the case where $b$ is a scalar. Given a set $\cD \subseteq \bR^n$ and a function $g: \cD \to \bR$, we adopt the usual notation $\epi_{\cD}(g), \gra_{\cD}(g), \hyp_{\cD}(g)$ to denote the epigraph, graph and hypograph of $g$ over $\cD$, respectively. For example, $\gra_{\cD}(g) := \{(x,t) \in\cD \times \bR: g(x) = t\}$. When  $\cD$ is omitted in the subscript, it is assumed to be $\bR^n$.
 For any set $\cS$, we denote by $\bd(\cS)$, $\ext(\cS)$, $\inter(\cS)$ its  boundary, extreme points, interior, respectively. When $\cS$ is not full-dimensional, $\relint(\cS), \relbd(\cS)$ denote its relative interior and relative boundary.

\subsection{Outline}
The rest of the paper is organized as follows. In \Cref{sec.prem}, we recall some preliminaries for intersection cuts. In \Cref{sec.extension}, we study extensions of submodular functions. In \Cref{sec.maxsub}, we study the hypograph-free sets for the submodular function. In \Cref{sec.ss}, we generalize the previous results for sets involving SS functions. In \Cref{sec.app}, we consider applications for intersection cuts to Boolean multilinear constraints and \bdopt. In \Cref{sec.sep}, we propose the hybrid discrete Newton algorithm for computing intersection cuts. In \Cref{sec.cresult}, we analyze the computational results.

\section{Intersection cut preliminaries}
\label{sec.prem}

In this section, we review the basic concept of intersection cuts. Assume that we are solving the optimization problem $\min_{z \in \cS}c z$. Given  a polyhedral outer approximation $\cP$  of a nonconvex set $\cS$, an LP relaxation is $\min_{z \in \cP}c z$. Then  an optimal \textit{relaxation point} $\relx{z}$ is  a vertex of $\cP$. An intersection cut is a particular cut that separates $\relx{z}$ from $\cS$.

\begin{definition}
Given $\cS \in \bR^p$, a closed set $\cC$ is called (convex) $\cS$-free, if  $\cC$ is convex and $\inter(\cC) \cap \cS = \varnothing$.
\end{definition}

The construction of intersection cuts \cite{conforti2014} needs two components: a corner polyhedron relaxation $\cR$ of $\cS$ with apex $\relx{z}$ ($\cR$ can be extracted from $\cP$), and an $\cS$-free set $\cC$ containing $\relx{z}$ in its interior. Then  an intersection cut  separates $\relx{z}$ from $\conv{(\cR \smallsetminus \inter(\cC))}$ (a set which, we note, contains $\cS$) as follows. The  half-space and ray representations of the corner polyhedron $\cR$ is as follows:
\begin{equation}
\label{eq.halfspace}
 \cR := \{z \in \bR^p: A(z - \relx{z}) \le 0\} =  \{z \in \bR^p:\exists \eta \in \bR^p_+ \textup{ }  z =\relx{z} +  \sum_{j =1}^{p} \eta_jr^j \},    
\end{equation}
 where $A$ is a $p$-by-$p$ invertible matrix, $r^j$ is the $j$-th column of $-A^{-1}$ and  an extreme ray of $\cR$.
 
 Define the \textit{step length}
 \begin{equation}
 \label{eq.steplen}
 \eta_j^\ast := \sup_{\eta_j \ge 0}\{\eta_j: \relx{z} + \eta_j r^j \in \cC\}.    
 \end{equation}
  The point $\relx{z}$ is separated by an intersection cut $$
	 \sum_{j =1}^{p} \frac{1}{\eta_j^\ast} A_j (z - \relx{z}) \le -1.$$
 Let $\cC, \cC^\ast$ be two $\cS$-free sets such that $\cC \subseteq\cC^\ast$. Then the intersection cut derived from $\cC^\ast$ dominates the intersection cut derived from $\cC$. This makes maximal $\cS$-free sets relevant in the study.

\section{Extensions of submodular functions}
\label{sec.extension}
 In this section, we study continuous extensions of submodular functions. W.l.o.g., we assume  in the sequel that for any submodular function $f$, $f(\mathbf{0}) = 0$ holds (by a translation of a constant). It is known that the Lovász extension \cite{lovasz1983submodular} extends $f$ from $\cB$ to $\bar{\cB}$. Based on this extension, we construct another  extension $\sF$ of $f$ defined over the entire space $\bR^n$, and study its analytical and combinatorial structures.

 We first look at some polyhedra associated with the submodular function $f$ \cite{Atamturk2021,schrijver2003combinatorial}.  Its \textit{extended polymatroid} is defined as $$\ep{f} :=\{s \in \bR^n:\forall x \in \cB,\, s  x \le f(x)\},$$ its convex hull of the epigraph $f$ over $\cB$ is defined as $$Q_f:= \conv(\epi_{\cB}(f)).$$ Recall that $\ext(\ep{f})$ are the vertices of $\ep{f}$, and we can further define a polyhedron
 \begin{equation}
	 \label{eq.ee}
	 \ee{f} := \{(x,t) \in \bR^{n+1}: \forall s \in \ext(\ep{f}),\, sx \le t\}.
 \end{equation}
In fact, $\ee{f}$ includes $Q_f$, because of the following lemma:
 \begin{lemma}[\cite{Atamturk2021}]
\label{lem.at}
$ Q_f =  \ee{f} \cap (\bar{\cB} \times \bR).$
\end{lemma}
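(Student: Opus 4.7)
The plan splits the equality into two inclusions, the forward one being routine and the reverse one hinging on the Lovász extension of $f$. For $Q_f \subseteq \ee{f} \cap (\bar{\cB} \times \bR)$: every point of $\epi_{\cB}(f)$ trivially lies in $\cB \times \bR \subseteq \bar{\cB} \times \bR$, and for any $(x,t) \in \epi_{\cB}(f)$ and $s \in \ext(\ep{f}) \subseteq \ep{f}$ we have $s x \le f(x) \le t$, so $(x,t) \in \ee{f}$. Both defining sets on the right-hand side are polyhedral, hence convex, so the inclusion propagates from $\epi_{\cB}(f)$ to its convex hull $Q_f$.

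For the reverse inclusion, I would fix $(x,t) \in \ee{f} \cap (\bar{\cB} \times \bR)$ and invoke Edmonds' greedy theorem: for $x \in \bar{\cB} \subseteq \bR^n_+$,
$$
\hat{f}(x) \;=\; \max_{s \in \ep{f}} s x \;=\; \max_{s \in \ext(\ep{f})} s x,
$$
where $\hat{f}$ denotes the Lovász extension of $f$ and the maximum is attained at the vertex $s^\pi$ of $\ep{f}$ indexed by any permutation $\pi$ of $[n]$ that orders $x$ decreasingly. The defining inequalities of $\ee{f}$ then yield $t \ge \hat{f}(x)$.

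Next I would exhibit $(x, \hat{f}(x))$ as a convex combination of points in $\gra_{\cB}(f)$. Taking the chain $\varnothing = T_0 \subset T_1 \subset \cdots \subset T_n$ induced by $\pi$ (so $T_i = \{\pi(1),\dots,\pi(i)\}$) and the standard weights $\mu_0 = 1 - x_{\pi(1)}$, $\mu_i = x_{\pi(i)} - x_{\pi(i+1)}$ for $1 \le i \le n-1$, $\mu_n = x_{\pi(n)}$, a direct verification yields $x = \sum_{i=0}^n \mu_i \cha{T_i}$, $\sum_i \mu_i = 1$, $\mu_i \ge 0$, and the definition of the Lovász extension together with $f(\mathbf{0})=0$ gives $\hat{f}(x) = \sum_{i=0}^n \mu_i f(\cha{T_i})$. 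Hence $(x, \hat{f}(x)) \in \conv(\gra_{\cB}(f)) \subseteq Q_f$. Finally, since $\epi_{\cB}(f) = \gra_{\cB}(f) + \{\mathbf{0}\}\times\bR_+$ implies $Q_f = \conv(\gra_{\cB}(f)) + \{\mathbf{0}\}\times\bR_+$, adding the recession direction $(\mathbf{0}, t - \hat{f}(x))$ to $(x, \hat{f}(x))$ places $(x, t)$ in $Q_f$.

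The only substantial input is Edmonds' greedy characterization of $\hat{f}$; it simultaneously supplies the inequality $t \ge \hat{f}(x)$ and, via the sorting permutation $\pi$, the chain decomposition that realizes $(x, \hat{f}(x))$ as a convex combination of vertices of $\gra_{\cB}(f)$. Everything else is routine bookkeeping on epigraphs and their recession cones, so I anticipate no difficulty beyond correctly citing that one classical theorem.
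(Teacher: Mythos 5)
Your proof is correct. Note, however, that the paper does not prove this statement at all: \Cref{lem.at} is imported verbatim from the cited reference, so there is no in-paper argument to compare against. Your write-up is a sound self-contained derivation of the cited fact. The forward inclusion is exactly as routine as you say ($s\in\ext(\ep{f})\subseteq\ep{f}$ gives $sx\le f(x)\le t$ on $\epi_{\cB}(f)$, and convexity of the right-hand side finishes it). For the reverse inclusion, the two inputs you use are precisely the ones the paper itself records elsewhere for other purposes: that $\sigma(\pi)\in\ext(\ep{f})$ and that the sorting permutation attains $\max_{s\in\ext(\ep{f})}sx$ for $x\ge 0$ (\Cref{lem.subvert} and \Cref{lem.sort}), and the Abel-summation identity $\sigma(\pi)x=\sum_{i}\mu_i f(\cha{T_i})$ with your stated weights, which together with $\mu_i\ge 0$, $\sum_i\mu_i=1$ (here is the only place $x\in\bar{\cB}$ is genuinely needed, via $\mu_0=1-x_{\pi(1)}\ge 0$ and $\mu_n=x_{\pi(n)}\ge 0$) puts $(x,\hat f(x))$ in $\conv(\gra_{\cB}(f))\subseteq Q_f$. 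The final step, adding the recession direction $(\mathbf{0},1)$ of $Q_f$, is also correct since $\conv$ commutes with the Minkowski sum by $\{\mathbf{0}\}\times\bR_+$. I see no gap.
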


Therefore, $x \in \bar{\cB}$ defines trivial facets of $Q_f$, and non-trivial facets of $Q_f$ are  $sx \le t$, where $s$ is a vertex of $\ep{f}$.

These polyhedra in turn give rise to some functions associated with $f$. A convex function $g $ is a \textit{convex underestimating function} of $f$ over $\cB$, if for all $x \in \cB$, $g(x) \le f(x)$.
The \textit{convex envelope} $\conve_{\cB}(f)$ is thus the maximal convex underestimating function of $f$ over $\cB$.  Since $Q_f$ is the epigraph of $\conve_{\cB}(f)$, by \Cref{lem.at}, $$\conve_{\cB}(f): \bar{\cB} \to \bR: x \to \max_{s \in \ext(\ep{f})}sx,$$ where its domain is $\bar{\cB}$.  We remark that  $\conve_{\cB}(f)$ is equivalent to the Lovász extension of $f$ \cite{Atamturk2021}. We will show that the cardinality $|\ext(\ep{f})|$ is not polynomial to $n$. Thereby, when computing $\conve_{\cB}(f)$, it is inefficient to evaluate all $sx$ for $s \in \ext(\ep{f})$. In fact, the value and the (sub)-gradients of  $\conve_{\cB}(f)$ on  $\bar{\cB}$ can be computed in a strongly polynomial time \cite{Atamturk2021}.

We define the envelope of $f$ extended to $\bR^n$ as  $$\sF: \bR^n \to \bR: x \to \max_{s \in \ext(\ep{f})}sx.$$  We note that $\sF$ simply enlarges the domain of $\conve_{\cB}(f)$ from $\bar{\cB}$ to $\bR^n$. This extension is algebraically simple, but analytical properties of $\sF(x)$ outside $\bar{\cB}$ will be studied in further detail. We find that
$\ee{f}$ is the epigraph of $\sF$,  \ie $\ee{f} = \epi(\sF)$, so $\sF$ is a convex function. Since every facet $sx \le t$ of $\ee{f}$  is in one-to-one correspondence to a linear underestimator function $sx$ of $\sF$, we call $\ee{f}$ the \textit{extended envelope epigraph}.

A fundamental question on $\sF$ is how to compute its value and (sub)-gradients efficiently, because  this is crucial in constructing intersection cuts. Since  the Lovász extension $\conve_{\cB}(f)$  is a restriction of $\sF$ on the hypercube $\bar{\cB}$,  we can compute $\sF$ efficiently (in a strongly polynomial time) on $\bar{\cB}$. In the following, we will show how to extend this method to compute $\sF$ over the entire space $\bR^n$. This  extension requires us to study the properties of  $\sF$ and $\ee{f}$.

We first look at combinatorial structures associated with the facets of $\ee{f}$. Recall that a permutation $\pi$ on $[n]$ is a bijective map from $[n]$ to itself. The map $\pi(i) \in [n]$ is the image of   an element $i \in [n]$ under this permutation. We denote by $\Pi$ the set of permutations on  $[n]$. We define the following sets and vectors related to permutations.

\begin{definition}
\label{def.perm}
 Given a permutation $\pi \in \Pi$ and an integer $i \in \{0, \cdots, n\}$, define $\pi([i]) := \{\pi(1),\cdots,\pi(i)\}$ ($\pi([0]) := \varnothing$), and  define $v^i(\pi) := \cha{\pi([i])}$. Define the map $\sigma: \Pi \to \bR^n$ such that it satisfies $\sigma(\pi)_{\pi(i)}= f(v^i(\pi)) -f(v^{i-1}(\pi))$  for all $\pi \in \Pi$ and for all $i \in [n]$.
\end{definition}

  The set of vertices  $\ext(\ep{f})$ is  the image of  $\Pi$ under the map $\sigma$.

\begin{lemma}[\cite{edmonds2003submodular}]
\label{lem.subvert}
$\sigma(\Pi) = \ext(\ep{f})$.
\end{lemma}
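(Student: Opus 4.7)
The plan is to establish both inclusions separately.

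For $\sigma(\Pi) \subseteq \ext(\ep{f})$, I fix $\pi \in \Pi$ and verify first feasibility $\sigma(\pi) \in \ep{f}$ and then extremality. For any $S \subseteq [n]$, set $S_k := S \cap \pi([k])$ and telescope
\[
  f(\cha{S}) - f(\mathbf{0}) = \sum_{k=1}^n \bigl(f(\cha{S_k}) - f(\cha{S_{k-1}})\bigr).
\]
If $\pi(k) \notin S$ the $k$-th term vanishes; if $\pi(k) \in S$, then submodularity (marginal increments of $f$ shrink as the base set grows) together with $S_{k-1} \subseteq v^{k-1}(\pi)$ gives $f(\cha{S_k}) - f(\cha{S_{k-1}}) \ge f(v^k(\pi)) - f(v^{k-1}(\pi)) = \sigma(\pi)_{\pi(k)}$. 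Summing over $k$ yields $f(\cha{S}) \ge \sigma(\pi) \cdot \cha{S}$. For extremality, the $n$ constraints indexed by $v^1(\pi),\ldots,v^n(\pi)$ are tight by the construction of $\sigma(\pi)$, and after reordering coordinates via $\pi$ the matrix whose rows are those characteristic vectors is lower-triangular with unit diagonal, hence of full rank; this uniquely determines $\sigma(\pi)$ as a vertex of $\ep{f}$.

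For the reverse inclusion $\ext(\ep{f}) \subseteq \sigma(\Pi)$, let $s \in \ext(\ep{f})$. Since $\ep{f}$ is defined by inequalities whose left-hand-side vectors lie in $\cB$ and no lower bound constrains $s'$, the recession cone of $\ep{f}$ is $-\bR_+^n$, so the normal cone to $\ep{f}$ at any vertex is contained in $\bR_+^n$. Pick $c$ in the interior of this normal cone (nonempty since $s$ is a vertex) and perturb it within this open set so that its entries are strictly positive and distinct, sorted as $c_{\pi(1)} > \cdots > c_{\pi(n)} > 0$ for some $\pi \in \Pi$; then $s$ remains the unique maximizer of $c \cdot s'$ on $\ep{f}$. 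I next invoke Edmonds' greedy theorem: $\sigma(\pi)$ is LP-optimal, as certified by the dual feasible solution $y_{v^k(\pi)} := c_{\pi(k)} - c_{\pi(k+1)}$ (with $c_{\pi(n+1)} := 0$) and $y_S := 0$ elsewhere, whose objective $\sum_k (c_{\pi(k)} - c_{\pi(k+1)}) f(v^k(\pi))$ rearranges by Abel summation to $c \cdot \sigma(\pi)$, matching the primal. Uniqueness of the optimizer forces $s = \sigma(\pi)$.

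The main obstacle will be certifying LP optimality of $\sigma(\pi)$ via the dual, which requires checking (i) dual feasibility $\sum_{S \ni i} y_S = c_i$ by direct computation exploiting the telescoping structure of the $y_{v^k(\pi)}$, and (ii) that the perturbation of $c$ genuinely keeps $s$ (rather than a neighboring vertex) as the unique maximizer, which follows from the interior of the normal cone at a vertex being open and hence stable under small perturbations. An alternative duality-free route would be to argue that the family $\cT := \{S \subseteq [n] : s \cdot \cha{S} = f(\cha{S})\}$ of tight sets at $s$ is closed under union and intersection (combining modularity of $s' \mapsto s' \cdot \cha{\cdot}$ with submodularity of $f$) and, because $s$ is a vertex, admits a maximal chain $\varnothing = U_0 \subset \cdots \subset U_n = [n]$ with $|U_i \setminus U_{i-1}| = 1$; defining $\pi(i)$ as the unique element of $U_i \setminus U_{i-1}$ then recovers $s = \sigma(\pi)$ from the chain tight equations.
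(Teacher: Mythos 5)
Your proof is correct. Note that the paper does not prove this lemma at all: it is stated as a known result and attributed to Edmonds, so there is no in-paper argument to compare against. What you have written is essentially Edmonds' classical proof reconstructed in full. The forward inclusion (telescoping plus diminishing marginal returns for feasibility, then the lower-triangular system of $n$ tight constraints at $v^1(\pi),\ldots,v^n(\pi)$ for extremality) is exactly the standard argument, and the tight constraints you use are the ones the paper records separately as \Cref{prop.supportpoint}. The reverse inclusion via the normal cone, a generic strictly positive objective with distinct entries, and the explicit dual certificate $y_{\pi([k])}=c_{\pi(k)}-c_{\pi(k+1)}$ is the greedy-algorithm/LP-duality proof; your dual feasibility check and the Abel-summation identity both go through, the latter using the paper's standing normalization $f(\mathbf{0})=0$ (worth stating explicitly, since the telescoping in the feasibility step and the $k=1$ term of the rearrangement both rely on it). One small point deserving a sentence rather than a claim: that $c$ chosen in the interior of the normal cone has strictly positive entries follows because a full-dimensional cone contained in $\bR^n_+$ has its interior contained in $\inter(\bR^n_+)$, not merely because the normal cone sits inside $\bR^n_+$. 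The alternative route you sketch through the lattice of tight sets and a maximal chain is the other standard proof (as in Schrijver) and would work equally well.
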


Every  permutation $\pi \in \Pi$ induces a vertex $\sigma(\pi)$ of $\ext(\ep{f})$ through the map $\sigma$, so the cardinality of $\ext(\ep{f})$ is $n!$ (not polynomial to $n$). The above lemma  shows that every facet of $\ee{f}$ (a non-trivial facet of $Q_f$) is given as $\sigma(\pi)x \le t$, and every linear underestimator of $\sF$ is given as $\sigma(\pi)x$.

 \begin{proposition}
 \label{prop.supportpoint}
Given a permutation $\pi \in \Pi$, for all $i \in [n] \cup \{0\}$, the facet-defining inequality $\sigma(\pi) x \le t$ is supported by $ \bigl(v^i(\pi), f(v^i(\pi))\bigl)$, \ie $\sigma(\pi) v^i(\pi) = f(v^i(\pi))$.
\end{proposition}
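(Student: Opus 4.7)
The plan is to prove this by a direct telescoping argument, unfolding the definitions of $\sigma(\pi)$ and $v^i(\pi)$ from \Cref{def.perm}. Since $v^i(\pi) = \cha{\pi([i])}$ is the characteristic vector of the set $\pi([i]) = \{\pi(1),\ldots,\pi(i)\}$, its $k$-th component equals $1$ if $k \in \pi([i])$ and $0$ otherwise. Hence the inner product collapses to a sum indexed over the first $i$ elements in the order induced by $\pi$:
\begin{equation*}
\sigma(\pi) v^i(\pi) \;=\; \sum_{k \in \pi([i])} \sigma(\pi)_k \;=\; \sum_{j=1}^{i} \sigma(\pi)_{\pi(j)}.
\end{equation*}

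Next I would substitute the defining relation $\sigma(\pi)_{\pi(j)} = f(v^j(\pi)) - f(v^{j-1}(\pi))$ from \Cref{def.perm}, which yields a telescoping sum:
\begin{equation*}
\sum_{j=1}^{i} \sigma(\pi)_{\pi(j)} \;=\; \sum_{j=1}^{i} \bigl(f(v^j(\pi)) - f(v^{j-1}(\pi))\bigr) \;=\; f(v^i(\pi)) - f(v^0(\pi)).
\end{equation*}
Since $\pi([0]) = \varnothing$ gives $v^0(\pi) = \mathbf{0}$, and since we normalized $f(\mathbf{0}) = 0$ at the start of \Cref{sec.extension}, the right-hand side equals $f(v^i(\pi))$, yielding the claim. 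The case $i = 0$ is degenerate but consistent: both sides equal $0$.

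There is no real obstacle here — the result is essentially a restatement of the tight support property of greedy vertices of the extended polymatroid, which is built into the definition of $\sigma$. The only subtlety worth noting is that the telescoping relies on the convention $f(\mathbf{0}) = 0$; without this normalization one would obtain $f(v^i(\pi)) - f(\mathbf{0})$ on the right, so the proposition hinges silently on the standing assumption stated at the beginning of the section.
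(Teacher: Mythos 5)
Your proof is correct and follows essentially the same route as the paper's: expand $\sigma(\pi)v^i(\pi)$ as a sum over $\pi([i])$, telescope using $\sigma(\pi)_{\pi(j)} = f(v^j(\pi)) - f(v^{j-1}(\pi))$, and invoke the normalization $f(\mathbf{0})=0$. Your write-up is in fact a cleaner rendering of the argument, and your remark about the silent dependence on the standing assumption $f(\mathbf{0})=0$ is well taken.
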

\begin{proof}
\begin{equation*}
    \sigma(\pi)v^i(\pi) = \sum_{ j \in [i] } \sigma(\pi)_{\pi(j)}
    = \sum_{ j \in [i] } \Bigl( f(v^j(\pi)) -f(v^j(\pi)) \Bigl)
	 = f^i(v^i(\pi))  - f(0)
    = f(v^i(\pi))
,\end{equation*}
 where the first equation follows from \Cref{def.perm}, the second equation follows from \Cref{lem.subvert}, and the last two equations follow from the expansion of the sum.
\end{proof}
 
  Conversely to \Cref{prop.supportpoint}, given a point in the graph of $f$, we can construct all the facets supported by it.
 
  \begin{corollary}
  \label{cor.point}
   For a point $v \in \cB$, let $\iota$ be the number of ones in $v$. If  a permutation $\pi \in \Pi$ satisfies that $v = v^\iota(\pi)$, then $ \left(v, f(v)\right)$ supports the facet-defining inequality $\sigma(\pi) x \le t$ of $\ee{f}$.
  \end{corollary}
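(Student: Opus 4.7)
The plan is to read this corollary as an immediate consequence of \Cref{prop.supportpoint}, specialized to the index $i = \iota$. The hypothesis $v = v^\iota(\pi)$ is exactly what is needed to plug into the proposition: it identifies the abstract support point $v^i(\pi)$ appearing there with the concrete point $v$ named in the corollary. So the proof really amounts to a one-line substitution together with a sanity check that the setup is consistent.

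First I would observe that, since $v \in \cB$ has exactly $\iota$ ones, a permutation $\pi$ with $v = v^\iota(\pi) = \cha{\pi([\iota])}$ always exists: one can choose $\pi(1),\dots,\pi(\iota)$ to be any enumeration of the support of $v$ and $\pi(\iota+1),\dots,\pi(n)$ to be any enumeration of the complement. Given such a $\pi$ as in the hypothesis, apply \Cref{prop.supportpoint} with $i:=\iota$ to get
\[
\sigma(\pi)\, v^\iota(\pi) \;=\; f(v^\iota(\pi)).
\]
Substituting $v=v^\iota(\pi)$ on both sides yields $\sigma(\pi)\, v = f(v)$, which says exactly that the point $(v,f(v))$ lies on the hyperplane $\sigma(\pi)x = t$. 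Combined with the fact that $\sigma(\pi)x \le t$ is facet-defining for $\ee{f}$ (by \Cref{lem.subvert} together with the definition of $\ee{f}$ in Eq.~\eqref{eq.ee}), this gives the claimed support.

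There is no real obstacle here; the only thing worth being careful about is not to confuse ``$v$ has $\iota$ ones'' with ``$\iota$ is canonically determined''—the corollary simply ranges over all $\pi$ whose first $\iota$ images enumerate the support of $v$, and each such $\pi$ produces a (possibly distinct) facet supported at $(v,f(v))$. This is also what makes the corollary a true converse of \Cref{prop.supportpoint}: it tells us that every $(v,f(v))$ with $|\cha{v}|=\iota$ supports as many facets as there are orderings of its support followed by orderings of its complement, which will be used later when analysing maximal hypograph-free sets containing $\epi(\sF)$.
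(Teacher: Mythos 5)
Your proof is correct and matches the paper's (implicit) argument: the corollary is stated in the paper as an immediate consequence of \Cref{prop.supportpoint}, obtained exactly by the substitution $i=\iota$, $v=v^\iota(\pi)$ that you carry out. Your additional remarks on the existence and multiplicity of such permutations are accurate but not needed for the statement itself.
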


At the moment, we find that  one can easily obtain the facial structure of $\ee{f}$ from that of $Q_f$. We ask how to separate facets of $\ee{f}$.
Since $\ee{f}$ is the epigraph of  $\sF$, the shape of $\ee{f}$ is determined by $\sF$, so it suffices to look at $\sF$.

From a convex analysis perspective, the  nonsmooth polyhedral function $\sF$ is the maximum of a set of linear functions, so it is  convex and positive homogeneous of degree 1. This means that $\sF$ is subdifferentiable \cite{hiriart2004fundamentals}. Moreover, $\sF$ has the following analytical properties.
 \begin{proposition}
 \label{prop.sub}
For all $x',x \in \bR^n$ and all $s \in \partial \sF(x')$, $\sF(x')=sx'$ and $\sF(x)\ge  sx$.  Moreover, $ \partial \sF(x') =\conv(\argmax_{s \in \ext(\ep{f})} s x')$.
\end{proposition}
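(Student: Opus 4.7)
The plan is to exploit two properties of $\sF$: it is a pointwise maximum of the finite family $\{x \mapsto sx : s \in \ext(\ep{f})\}$ (finite by \Cref{lem.subvert}, since $|\Pi| = n!$), and it is positively homogeneous of degree $1$. Both are immediate from the definition. Convexity follows from the first, and subdifferentiability everywhere from convexity and continuity.

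For the first claim, the key tool is positive homogeneity. Given $s \in \partial \sF(x')$, the subgradient inequality reads $\sF(x) \ge \sF(x') + s(x - x')$ for all $x \in \bR^n$. Evaluating at $x = 2x'$ gives $2\sF(x') \ge \sF(x') + sx'$, hence $\sF(x') \ge sx'$. Evaluating at $x = \mathbf{0}$ (noting $\sF(\mathbf{0}) = 0$) gives $0 \ge \sF(x') - sx'$, hence $\sF(x') \le sx'$. Therefore $\sF(x') = sx'$. Plugging this back into the subgradient inequality yields $\sF(x) \ge sx' + s(x - x') = sx$ for every $x$.

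For the subdifferential characterization, the inclusion $\supseteq$ is straightforward: if $s = \sum_k \lambda_k s^k$ is a convex combination of vertices $s^k \in \argmax_{s' \in \ext(\ep{f})} s'x'$, then each $s^k$ satisfies $\sF(x) \ge s^k x$ for all $x$ and $s^k x' = \sF(x')$, so taking convex combinations shows that $s$ satisfies the defining subgradient inequality, hence $s \in \partial \sF(x')$. The inclusion $\subseteq$ is the more delicate direction. The plan is to compute the directional derivative: since $\sF$ is a maximum of finitely many linear functions, a standard argument (continuity of $s'x$ in $x$ ensures that the set of active indices near $x'$ is contained in $\argmax_{s' \in \ext(\ep{f})} s'x'$ for sufficiently small perturbations) yields
\begin{equation*}
\sF'(x'; d) \;=\; \max_{s' \in \argmax_{s'' \in \ext(\ep{f})} s'' x'} s' d
\end{equation*}
for every direction $d \in \bR^n$. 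The right-hand side is precisely the support function of $\conv(\argmax_{s' \in \ext(\ep{f})} s' x')$. Since $\partial \sF(x')$ is the (closed convex) set whose support function is $d \mapsto \sF'(x'; d)$, and $\conv(\argmax_{s' \in \ext(\ep{f})} s' x')$ is already closed and convex (convex hull of a finite set), the two sets coincide.

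The main obstacle is the converse inclusion in the subdifferential formula; the cleanest route is via the directional-derivative formula above, which is classical for max-of-smooth functions (cf.\ \cite{hiriart2004fundamentals}). Alternatively, one can argue by contradiction: if $s \in \partial \sF(x')$ but $s \notin \conv(\argmax_{s' \in \ext(\ep{f})} s' x')$, strictly separating $s$ from this convex hull produces a direction $d$ with $sd$ strictly exceeding $\sF'(x'; d)$, contradicting $s \in \partial \sF(x')$.
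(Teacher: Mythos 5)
Your proof is correct and follows the same approach the paper intends: the paper's own proof merely observes that $\sF$ is a pointwise maximum of finitely many linear functions, hence convex and positively homogeneous of degree~$1$, and declares the rest ``easy to show.'' You have simply supplied the omitted details --- the homogeneity trick at $x=2x'$ and $x=\mathbf{0}$ for the first claim, and the standard directional-derivative/support-function characterization of the subdifferential of a finite max of linear functions for the second --- all of which are sound.
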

\begin{proof}
As $\sF(x)= \max_{s \in \ext(\ep{f})} s x$, $\sF$ is the maximum of a set of linear functions. This implies that it is positive-homogeneous of degree-1 and convex, and it is easy to show the other results.
\end{proof}

Given $\relx{x} \in \bR^n$, the evaluation of $\sF(\relx{x})$ is called the \textit{extended polymatroid vertex maximization problem}, as by definition $\sF(\relx{x}) $ equals
\begin{equation}
\label{eq.sep}
    \max_{s \in \ext(\ep{f})} s \relx{x}.
\end{equation}

  By \Cref{prop.sub}, an optimal solution $s^\ast$ is a subgradient of $\sF$ at $\relx{x}$ (\ie $s^\ast \in \partial \sF(\relx{x}))$.  By \Cref{lem.subvert},  $ \max_{s \in \ext(\ep{f})} s \relx{x} = \max_{\pi \in \Pi} \sigma(\pi)\relx{x}$, so \eqref{eq.sep} asks for a permutation $\pi^\ast$ that maximizes $\sigma(\pi^\ast)\tilde{x}$.  One of the main findings in this section is an algorithm to solve \eqref{eq.sep}.

To tackle \eqref{eq.sep}, we  look at a related relaxed problem, namely the \textit{extended polymatroid maximization problem}, that is well studied:
\begin{equation}
\label{eq.sep2}
    \max_{s \in \ep{f}} s \relx{x}.
\end{equation}
If $\relx{x} \ge 0$,   a strongly polynomial time   \textit{sorting algorithm}  can  solve the extended polymatroid maximization  \cite{edmonds2003submodular}: Let $\pi^\ast \in \Pi$ be a permutation  such that $\relx{x}_{\pi^\ast(1)}\ge\cdots  \ge \relx{x}_{\pi^\ast(n)} $,  then an optimal solution to \eqref{eq.sep2} is  $\sigma(\pi^\ast)$.

We note that the vertices $ \ext(\ep{f})$ are a finite set, so \eqref{eq.sep} is always bounded; $\ep{f}$ is the Minkowski sum of $\ext(\ep{f})$ and a set of recession rays, so $\ep{f}$ is unbounded. This means that \eqref{eq.sep2} can be unbounded.
\begin{lemma}
\label{lem.notequi}
[\cite{Atamturk2021,edmonds2003submodular}]
\label{lem.sort}
When $\tilde{x} \ge  0$,  the optimum of  \eqref{eq.sep2} must be a vertex, and \eqref{eq.sep} is equivalent to \eqref{eq.sep2}; when $\tilde{x}$ has some negative entries,  \eqref{eq.sep} is unbounded and is not equivalent to  \eqref{eq.sep2}.
\end{lemma}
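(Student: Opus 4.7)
The plan is to split the argument on the sign of $\tilde{x}$ and to exploit the polyhedral description of $\ep{f}$ via its recession cone. First I would verify that the recession cone of $\ep{f}$ is exactly $\{d \in \bR^n : d \le \mathbf{0}\}$. A direction $d$ is recessive iff $d \cdot x \le 0$ for every $x \in \cB$, since the $\cB$-indexed inequalities $s\cdot x \le f(x)$ defining $\ep{f}$ have right-hand sides independent of $s$. Choosing $x = e_i$ forces $d_i \le 0$, while any $d \le \mathbf{0}$ trivially satisfies $d\cdot x \le 0$ on all of $\cB$. As a byproduct, the lineality space of $\ep{f}$ is trivial (so $\ep{f}$ is pointed and admits vertices), which is also immediate from \Cref{lem.subvert}.

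If $\tilde{x} \ge \mathbf{0}$, then $d \cdot \tilde{x} \le 0$ for every recession direction $d$, so $s \mapsto s\tilde{x}$ is bounded above on $\ep{f}$. Standard LP theory on a pointed polyhedron then guarantees that the supremum in \eqref{eq.sep2} is attained at a vertex, and since $\ext(\ep{f}) \subseteq \ep{f}$, the optima of \eqref{eq.sep} and \eqref{eq.sep2} coincide; this settles the first clause. Alternatively, one can read off the optimal vertex directly via the sorting algorithm recalled just above the lemma: the permutation $\pi^{\ast}$ that orders $\tilde{x}$ in non-increasing order makes $\sigma(\pi^{\ast}) \in \ext(\ep{f})$ optimal for \eqref{eq.sep2}.

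For the case when some $\tilde{x}_i < 0$, I would pick $d := -e_i \le \mathbf{0}$, which is a recession direction by the characterization above. For any $s \in \ep{f}$ the ray $\{s + \lambda d : \lambda \ge 0\}$ lies in $\ep{f}$ and yields $(s + \lambda d)\tilde{x} = s\tilde{x} - \lambda \tilde{x}_i \to +\infty$, so \eqref{eq.sep2} is unbounded; meanwhile \eqref{eq.sep} is a maximum over the finite set $\ext(\ep{f})$ and hence always finite, so the two problems cannot be equivalent. (The paragraph immediately preceding the lemma states that \eqref{eq.sep} is always bounded, so I would read the clause ``\eqref{eq.sep} is unbounded'' in the statement as a typographical slip for ``\eqref{eq.sep2} is unbounded''.) The only genuine obstacle is the recession-cone characterization; once that is in hand, both cases reduce to elementary LP arguments.
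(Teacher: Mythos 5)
Your proof is correct. Note, however, that the paper does not actually prove this lemma: it is imported verbatim from the cited references (Atamt\"urk--G\'omez and Edmonds), so there is no in-paper argument to compare against. Your recession-cone derivation is the standard self-contained route and it is sound: the recession cone of $\ep{f}=\{s: sx\le f(x)\ \forall x\in\cB\}$ is $\{d: d\cdot x\le 0\ \forall x\in\cB\}=\{d\le \mathbf{0}\}$ (the unit vectors $e_i\in\cB$ force $d_i\le 0$, and the converse is immediate from $x\ge \mathbf{0}$), pointedness follows since the lineality space is $\{d: d\cdot e_i=0\ \forall i\}=\{\mathbf{0}\}$, and nonemptiness is guaranteed by \Cref{lem.subvert}. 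From there the two cases are elementary LP facts: boundedness plus pointedness gives attainment at a vertex and hence equality of \eqref{eq.sep} and \eqref{eq.sep2} when $\tilde{x}\ge \mathbf{0}$, while a negative entry $\tilde{x}_i<0$ makes the ray $s-\lambda e_i$ drive the objective of \eqref{eq.sep2} to $+\infty$, whereas \eqref{eq.sep} is a maximum over the finite set $\ext(\ep{f})$ and is always finite. Your reading of the clause ``\eqref{eq.sep} is unbounded'' as a typographical slip for ``\eqref{eq.sep2} is unbounded'' is also right; it is the only reading consistent with the paragraph immediately preceding the lemma, which explicitly states that \eqref{eq.sep} is always bounded.
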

 Therefore, \eqref{eq.sep} is not equivalent to \eqref{eq.sep2} for $\title{x} \in \bR^n \smallsetminus \bR^n_{+}$. However, we can show that the sorting algorithm also solves the problem \eqref{eq.sep} for any case.

\begin{proposition}
\label{prop.out}
The output of the sorting algorithm is optimal to the extended polymatroid vertex
maximization problem \eqref{eq.sep}.
\end{proposition}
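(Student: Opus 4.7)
The plan is to reduce the general case $\tilde{x} \in \bR^n$ to the nonnegative case $\tilde{x} \ge 0$ (for which optimality of the sorting algorithm is already guaranteed by \Cref{lem.notequi}) via a uniform shift, and to exploit the telescoping structure of $\sigma(\pi)$ to show that such a shift does not affect the optimal permutation.

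First, I would introduce the shifted vector $\tilde{y} := \tilde{x} + c\mathbf{1}$, where $c := \max\{0, -\min_{i \in [n]}\tilde{x}_i\}$, so that $\tilde{y} \ge \mathbf{0}$. A key observation is that the sort-induced permutation depends only on the relative order of the entries of its argument: any permutation $\pi^\ast$ satisfying $\tilde{x}_{\pi^\ast(1)} \ge \cdots \ge \tilde{x}_{\pi^\ast(n)}$ also satisfies $\tilde{y}_{\pi^\ast(1)} \ge \cdots \ge \tilde{y}_{\pi^\ast(n)}$. Hence the sorting algorithm returns the same $\pi^\ast$ on the two inputs.

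The heart of the argument is then to show that the objective values of \eqref{eq.sep} on $\tilde{x}$ and $\tilde{y}$ differ by a constant that is independent of the permutation. By \Cref{def.perm},
\begin{equation*}
\sigma(\pi)\cdot \mathbf{1} \;=\; \sum_{i=1}^n \sigma(\pi)_{\pi(i)} \;=\; \sum_{i=1}^n \bigl(f(v^i(\pi)) - f(v^{i-1}(\pi))\bigr) \;=\; f(v^n(\pi)) - f(v^0(\pi)) \;=\; f(\mathbf{1}),
\end{equation*}
since $v^n(\pi) = \mathbf{1}$, $v^0(\pi) = \mathbf{0}$, and $f(\mathbf{0})=0$ by our standing assumption. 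The crucial point is that this value is the same constant $f(\mathbf{1})$ for every $\pi \in \Pi$. Consequently, for every $\pi$,
\begin{equation*}
\sigma(\pi)\cdot \tilde{y} \;=\; \sigma(\pi)\cdot \tilde{x} + c\,\sigma(\pi)\cdot \mathbf{1} \;=\; \sigma(\pi)\cdot \tilde{x} + c\,f(\mathbf{1}),
\end{equation*}
so the two problems $\max_{\pi \in \Pi}\sigma(\pi)\cdot\tilde{x}$ and $\max_{\pi \in \Pi}\sigma(\pi)\cdot\tilde{y}$ share the same set of maximizers.

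Finally, by \Cref{lem.notequi} applied to $\tilde{y} \ge \mathbf{0}$, the permutation $\pi^\ast$ output by the sorting algorithm on input $\tilde{y}$ maximizes $\sigma(\pi)\cdot\tilde{y}$ over $\pi\in\Pi$, equivalently over vertices $s\in\ext(\ep{f})$ by \Cref{lem.subvert}. Combining with the order-preservation of the shift and the constant-offset identity above, $\pi^\ast$ also maximizes $\sigma(\pi)\cdot\tilde{x}$, proving the claim. The only mildly delicate point—hardly a real obstacle—is verifying the telescoping identity $\sigma(\pi)\cdot \mathbf{1} = f(\mathbf{1})$ that makes the shift ``invisible'' to the optimization; everything else is bookkeeping.
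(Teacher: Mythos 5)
Your proposal is correct and follows essentially the same route as the paper's proof: translate $\tilde{x}$ by a multiple of $\mathbf{1}$ to reduce to the nonnegative case, observe that the sorting permutation is invariant under this shift, and use the telescoping identity $\sigma(\pi)\mathbf{1}=f(\mathbf{1})$ (which the paper obtains via \Cref{prop.supportpoint}) to conclude that the shift changes every objective value $\sigma(\pi)\tilde{x}$ by the same constant. The only cosmetic difference is that you phrase the argument as an exact constant-offset identity, whereas the paper writes it as a chain of inequalities that collapse to equalities; the mathematical content is identical.
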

\begin{proof}

Let $\pi^\ast$ be the permutation found by the sorting algorithm. By \Cref{lem.subvert}, $\sigma(\pi^\ast)$ is in $\ext(\ep{f})$ and hence  a feasible solution to \eqref{eq.sep}.  Next, we prove the optimality of $\sigma(\pi^\ast)$. Let $d := \min_{i \in [n]} \relx{x}_i$, then the translated vector $\relx{x} - d\mathbf{1}  = (\relx{x}_i-d)_{i \in [n]}  \ge \mathbf{0}$. The following inequalities hold:
\begin{equation}
\label{eq.many}
\begin{split}
   \sigma(\pi^\ast) \relx{x} & \le \max_{s \in \ext(\ep{f})} s \relx{x} = \max_{s \in \ext(\ep{f})} s (\relx{x}-d \mathbf{1} +d \mathbf{1}) 
   \\ &\le  \max_{s \in \ext(\ep{f})} s  (\relx{x} - d \mathbf{1}) +  \max_{s \in \ext(\ep{f})} s(d \mathbf{1}).
\end{split}
\end{equation}
It is easy to show that $(\relx{x} - d \mathbf{1} )_{\pi^\ast(1)}\ge\cdots  \ge (\relx{x} - d \mathbf{1})_{\pi^\ast(n)} $. As $\relx{x} - d \mathbf{1} \ge \mathbf{0}$,  by \Cref{lem.notequi} and the sorting algorithm, $ \sigma(\pi^\ast) (\relx{x} - d \mathbf{1}) = \max_{s \in \ep{f}} s  (\relx{x} - d \mathbf{1}) = \max_{s \in \ext(\ep{f})} s  (\relx{x} - d \mathbf{1}) $. It follows from \Cref{prop.supportpoint} that $\sigma(\pi)v^n(\pi) = \sigma(\pi)\mathbf{1} = f(\mathbf{1})$. As the entries of $d \mathbf{1}$ are identical,   for any $\pi \in \Pi$,  $\sigma(\pi) (d \mathbf{1}) = d f(\mathbf{1})$. Therefore, for any $s \in  \ext(\ep{f})$,  $s (d \mathbf{1}) = d f(\mathbf{1})$, so $\sigma(\pi^\ast) (d \mathbf{1}) = \max_{s \in \ext(\ep{f})} s(d \mathbf{1})$. Looking at the inequalities \eqref{eq.many}, they become equations, because
\begin{equation*}
   \sigma(\pi^\ast) \relx{x} \le \max_{s \in \ext(\ep{f})} s \relx{x} \le\sigma(\pi^\ast)   (\relx{x} - d \mathbf{1}) +  \sigma(\pi^\ast)(d \mathbf{1})=  \sigma(\pi^\ast) \relx{x}.
\end{equation*}
Therefore, $\sigma(\pi^\ast)$ is an optimal solution to \eqref{eq.sep}.
\end{proof}

Given $\tilde{x} \in \bR^n$, the sorting algorithm outputs a permutation on it. The sorting algorithm  is translation-invariant, \ie translating each entry of $\tilde{x}$ by the same value does not change the output permutation. A byproduct of \Cref{prop.out} is that the translation invariance implies the ray-linearity of $\sF$.

\begin{corollary}
    Let $\tilde{x} \in \bR^n$, then $\sF$ is linear on $\tilde{x} + \lambda \mathbf{1}$ w.r.t. $\lambda \in \bR$.
\end{corollary}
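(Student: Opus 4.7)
The plan is to exploit the translation-invariance of the sorting algorithm that was just established in \Cref{prop.out}, combined with the evaluation $\sigma(\pi)\mathbf{1} = f(\mathbf{1})$ from \Cref{prop.supportpoint}.

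First I would fix $\tilde{x} \in \bR^n$ and let $\pi^\ast \in \Pi$ be a permutation produced by the sorting algorithm applied to $\tilde{x}$, so that $\tilde{x}_{\pi^\ast(1)} \ge \cdots \ge \tilde{x}_{\pi^\ast(n)}$. The key observation is that for every $\lambda \in \bR$, the shifted vector $\tilde{x} + \lambda \mathbf{1}$ has the same coordinate ordering as $\tilde{x}$, so the same permutation $\pi^\ast$ is a valid output of the sorting algorithm applied to $\tilde{x} + \lambda \mathbf{1}$. By \Cref{prop.out}, $\sigma(\pi^\ast)$ is then optimal for the extended polymatroid vertex maximization problem at both $\tilde{x}$ and $\tilde{x} + \lambda \mathbf{1}$, which gives
\begin{equation*}
\sF(\tilde{x} + \lambda \mathbf{1}) = \sigma(\pi^\ast)(\tilde{x} + \lambda \mathbf{1}) = \sigma(\pi^\ast)\tilde{x} + \lambda\, \sigma(\pi^\ast)\mathbf{1}.
\end{equation*}

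Next I would evaluate $\sigma(\pi^\ast)\mathbf{1}$ using \Cref{prop.supportpoint} with $i = n$: since $v^n(\pi^\ast) = \cha{[n]} = \mathbf{1}$, we get $\sigma(\pi^\ast)\mathbf{1} = f(\mathbf{1})$. Substituting back yields
\begin{equation*}
\sF(\tilde{x} + \lambda \mathbf{1}) = \sF(\tilde{x}) + \lambda f(\mathbf{1}),
\end{equation*}
which is affine, hence linear in the sense claimed, as a function of $\lambda$.

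The only subtlety is the possible presence of ties among the entries of $\tilde{x}$: in that case the sorting algorithm may admit several valid outputs. However this is not an obstacle, because we only need to exhibit \emph{one} permutation which is simultaneously optimal for all $\lambda$, and any tie-breaking rule fixed once for $\tilde{x}$ remains valid after shifting by $\lambda\mathbf{1}$. The value $\sF(\tilde{x} + \lambda\mathbf{1})$ is well defined regardless of tie-breaking, so the identity above holds unambiguously.
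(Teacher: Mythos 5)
Your proof is correct and follows essentially the same route the paper intends: the corollary is presented there as a direct byproduct of \Cref{prop.out} via the translation invariance of the sorting algorithm, and your writeup simply makes that argument explicit, including the evaluation $\sigma(\pi^\ast)\mathbf{1} = f(\mathbf{1})$ from \Cref{prop.supportpoint} and the tie-breaking remark. No gaps.
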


 We look at the boundary of $\ee{f}$. By \Cref{prop.supportpoint} and \Cref{cor.point}, for all $x \in \cB$, the point $(x,f(x))$  supports  some facets of $\ee{f}$.

\begin{theorem}
\label{thm.bdeef}
$\ee{f} \cap \hyp_{\cB}(f) = \gra_{\cB}(f) \subseteq \bd(\ee{f})$.
\end{theorem}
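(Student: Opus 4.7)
The plan is to unpack both the set equality and the boundary inclusion by showing the single key fact that $\sF$ agrees with $f$ on $\cB$; everything then follows quickly from $\ee{f}=\epi(\sF)$.

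First I would establish that $\sF(x) = f(x)$ for every $x \in \cB$. For the upper bound $\sF(x) \le f(x)$, I recall that by the definition of the extended polymatroid, every $s \in \ep{f}$ satisfies $sx \le f(x)$ for $x \in \cB$; since $\ext(\ep{f}) \subseteq \ep{f}$, maximizing over $\ext(\ep{f})$ preserves the inequality. For the matching lower bound, given $x \in \cB$ I would write $x = \cha{S}$ for some $S \subseteq [n]$ and choose any permutation $\pi$ that lists the elements of $S$ first, so that $v^{|S|}(\pi) = x$. By \Cref{prop.supportpoint}, $\sigma(\pi) x = f(x)$, and since $\sigma(\pi) \in \ext(\ep{f})$ by \Cref{lem.subvert}, we get $\sF(x) \ge \sigma(\pi) x = f(x)$.

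Next, using $\ee{f} = \epi(\sF)$, I would prove the equality $\ee{f} \cap \hyp_{\cB}(f) = \gra_{\cB}(f)$ by double inclusion. If $(x,t)$ lies in the left-hand side, then $x \in \cB$, $t \ge \sF(x) = f(x)$, and $t \le f(x)$, so $t = f(x)$, placing $(x,t)$ in $\gra_{\cB}(f)$. Conversely, if $(x,t) \in \gra_{\cB}(f)$, then $t = f(x) = \sF(x)$, so $(x,t) \in \epi(\sF) = \ee{f}$, and trivially $t \le f(x)$ puts $(x,t) \in \hyp_{\cB}(f)$.

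For the inclusion $\gra_{\cB}(f) \subseteq \bd(\ee{f})$, I would note that for any $x \in \cB$ the point $(x, f(x)) = (x, \sF(x))$ lies in the graph of the convex function $\sF$, hence on the boundary of its epigraph $\ee{f}$. Alternatively, and more in the spirit of the paper's combinatorial perspective, \Cref{cor.point} produces an explicit facet-defining inequality $\sigma(\pi) x \le t$ of $\ee{f}$ supported by $(x,f(x))$, which puts it on a facet and thus on $\bd(\ee{f})$. I do not expect a real obstacle here — the only subtle point is the identity $\sF = f$ on $\cB$, which relies on combining the definition of $\ep{f}$ with \Cref{prop.supportpoint} and \Cref{lem.subvert}.
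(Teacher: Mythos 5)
Your proof is correct and takes essentially the same route as the paper: both arguments boil down to showing that $(v,f(v))$ belongs to $\ee{f}$ while every point strictly below it does not, the latter via the supporting facet from \Cref{prop.supportpoint}. The only cosmetic difference is that you derive the first fact directly from the definition of $\ep{f}$ (i.e., $\sF \le f$ on $\cB$), whereas the paper routes it through $Q_f$ and \Cref{lem.at}; both are valid.
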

\begin{proof}
We consider a point $v \in \cB$ and look at the line $ \ell = \{(v,t): t \in \bR\}$. It can be separated into the restricted epigraph $\ell_+:=\{(v,t):f(v) \le t\}$ and  the restricted hypograph $\ell_-:=\{(v,t):f(v) \ge t\}$, as $\ell_+ \cap \ell_- = (v, f(v))$ and $ \ell = \ell_+ \cup \ell_-$.
First,  we know that, by  definition of $Q_f$ and \Cref{lem.at},   $\ell_+ \subseteq Q_f \subseteq \ee{f}$. Second, by \Cref{prop.supportpoint},  the point $(v, f(v))$ supports some facets of $\ee{f}$, so the point $(v,t)$ with $t < f(v)$ is separated by these facets from  $\ee{f}$. Thereby, we know that $\ell_- \cap \ee{f} = \{(v, f(v))\}$. To summarize, we know that $\ee{f} \cap \ell = \ell_+$ and $(v, f(v)) \in \bd(\ee{f})$. As $\gra_{\cB}(f) = \cup_{v \in B} \{(v,f(v))\}$, we have that  $ \gra_{\cB}(f)  \subseteq \bd(\ee{f})$. As the hypograph $\hyp_{\cB}(f) = \cup_{v \in \cB} \{(v,t):f(v) \ge t\} $ (union of restricted hypographs), we have that $\ee{f} \cap \hyp_{\cB}(f) = \gra_{\cB}(f)$.
\end{proof}

 As already mentioned, $\sF$ is convex and $\ee{f} =\epi(\sF) $, so $\sF$ is also a continuous extension of $f$. As $\ee{f}$ includes $Q_f$, $\sF$ further extends $\conve_{\cB}(f)$ (the Lovász extension).
 
We now understand the facial structure of $\ee{f}$, which will help us construct hypograph-free sets.
We also know how to compute the value and subgradients of $\sF$ at any point in $\bR^n$, which is important for constructing intersection cuts.

\section{Hypograph-free sets for submodular functions}
\label{sec.maxsub}
 In this section, we construct two types of hypograph-free sets for the  submodular function $f$.

 First, we show that one can lift a maximal $\cB$-free set into a  maximal $\hyp_{\cB}(f)$-free  set.
 \begin{theorem}
\label{thm.bin}
Let $f:\cB \to \bR$ be an arbitrary function, and let $\cK$ be a maximal $\cB$-free set in $\bR^n$. Then  $\cC:=\cK \times \bR$ is a maximal $\hyp_{\cB}(f)$-free set.
\end{theorem}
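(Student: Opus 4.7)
The plan is to split the statement into two independent claims: (i) $\cC:=\cK\times\bR$ is $\hyp_\cB(f)$-free, and (ii) no closed convex $\hyp_\cB(f)$-free set strictly contains $\cC$. The freeness part is essentially bookkeeping; the maximality part reduces, through a recession-cone argument, to the maximality hypothesis on $\cK$.

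For (i), I would simply observe that $\cC$ is closed and convex as a product of a closed convex set with $\bR$, and $\inter(\cC)=\inter(\cK)\times\bR$. Any putative point $(x,t)\in\inter(\cC)\cap\hyp_\cB(f)$ would have $x\in\inter(\cK)\cap\cB$, contradicting the $\cB$-freeness of $\cK$.

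For (ii), I would argue by contradiction: suppose $\cC^\ast$ is a closed convex $\hyp_\cB(f)$-free set with $\cC\subsetneq\cC^\ast$. The key observation is that $\cC^\ast$ must inherit the ``cylindrical'' shape of $\cC$: pick any $k\in\cK$; then the entire vertical line $\{k\}\times\bR$ lies in $\cC^\ast$, so the recession cone of $\cC^\ast$ contains both $(0,1)$ and $(0,-1)$. A standard property of closed convex sets whose recession cone contains a line implies that $\cC^\ast=\cK^\ast\times\bR$ where $\cK^\ast:=\proj_x(\cC^\ast)$ is itself closed and convex. Because $\cC\subsetneq\cC^\ast$, we get $\cK\subsetneq\cK^\ast$. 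Finally, if $v\in\inter(\cK^\ast)\cap\cB$, then $(v,f(v))$ (in fact every $(v,t)$ with $t\le f(v)$) lies in $\inter(\cC^\ast)\cap\hyp_\cB(f)$, contradicting the $\hyp_\cB(f)$-freeness of $\cC^\ast$. Thus $\cK^\ast$ is $\cB$-free and strictly contains $\cK$, contradicting the maximality of $\cK$.

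The main obstacle, and the only step that is not pure bookkeeping, is establishing the structural reduction $\cC^\ast=\cK^\ast\times\bR$. Everything else then falls out cleanly by applying the maximality of $\cK$ on the projected slice. Note that the argument does not use submodularity or any property of $f$ beyond $f:\cB\to\bR$, which matches the generality claimed in the statement.
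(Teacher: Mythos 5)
Your proposal is correct and follows essentially the same route as the paper's proof: both establish freeness via $\inter(\cC)=\inter(\cK)\times\bR$, and both prove maximality by showing that any larger $\hyp_{\cB}(f)$-free set must be a cylinder $\cK'\times\bR$ (via the recession-cone/lineality argument) whose base $\cK'$ is $\cB$-free, then invoking the maximality of $\cK$. Your writeup just spells out the cylindrical-decomposition step in a bit more detail than the paper does.
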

\begin{proof}
We note that $\inter(\cC)= \inter(\cK) \times \bR$.
It is easy to show that $\cC$ is $\hyp_{\cB}(f)$-free, since $\inter(\cC) \cap \hyp_{\cB}(f) = \varnothing$.
Assume that there exists a $\hyp_{\cB}(f)$-free set $\cC'$ including $\cC$. Then  the recession cone of $\cC'$ must include  that of $\cC$, so $\cC' = \cK' \times \bR$ for some closed convex set $\cK'$ including $\cK$. Moreover, $\cK'$ must be a  $\cB$-free set, otherwise, there exists a point $x \in \cB \cap \inter(\cK')$ such that $(x,f(x)) \in \inter(\cK') \times \bR = \inter(\cC')$. However, since $\cK$ is maximally $\cB$-free, this implies that $\cK = \cK'$. As a result, $\cC = \cC'$, so $\cC$ is maximal.
\end{proof}

 This construction does not rely on any structure of $f$, as it just lifts a $\cB$-free set. For any $j \in [n]$, the simple lifted split $ \{x \in \bR^n: 0 \le x_j \le 1\} \times \bR$ is a maximal $\hyp_{\cB}(f)$-free set. We next construct  $\hyp_{\cB}(f)$-free sets using the submodularity, for both theoretical and computational interests.

We show that the extended envelope epigraph is a hypograph-free set.
\begin{proposition}
\label{cor.free}
$\ee{f}, Q_f$ are $\hyp_{\cB}(f)$-free sets.
\end{proposition}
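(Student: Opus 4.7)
The plan is to invoke \Cref{thm.bdeef}, which has already done most of the work: it asserts that $\ee{f} \cap \hyp_{\cB}(f) = \gra_{\cB}(f) \subseteq \bd(\ee{f})$. This is exactly saying that every intersection point of $\ee{f}$ with the hypograph lies on $\bd(\ee{f})$, which is precisely what $\hyp_{\cB}(f)$-freeness requires. Convexity of $\ee{f}$ comes for free because $\ee{f} = \epi(\sF)$ and $\sF$ is convex (as a max of linear functions).

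For the $\ee{f}$ case, I would argue by contradiction: if $(x,t) \in \inter(\ee{f}) \cap \hyp_{\cB}(f)$, then in particular $(x,t) \in \ee{f} \cap \hyp_{\cB}(f) \subseteq \gra_{\cB}(f) \subseteq \bd(\ee{f})$ by \Cref{thm.bdeef}, contradicting $(x,t) \in \inter(\ee{f})$. Hence $\inter(\ee{f}) \cap \hyp_{\cB}(f) = \varnothing$, so $\ee{f}$ is $\hyp_{\cB}(f)$-free.

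For the $Q_f$ case, the cleanest route is to use \Cref{lem.at}, which gives $Q_f = \ee{f} \cap (\bar{\cB} \times \bR)$. Any point $(x,t) \in \inter(Q_f)$ must have $x$ in the relative interior of the projection $\bar{\cB}$, hence $x \in (0,1)^n$, which is disjoint from $\cB$; since $\hyp_{\cB}(f) \subseteq \cB \times \bR$, we get $\inter(Q_f) \cap \hyp_{\cB}(f) = \varnothing$. Alternatively, one could note $Q_f \subseteq \ee{f}$ and use the first part, provided one is careful about passing between the two interiors — so I'd prefer the projection argument, which avoids that subtlety. Convexity of $Q_f$ is immediate from its definition as a convex hull.

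There is no real obstacle here — the heavy lifting was done in \Cref{thm.bdeef}, together with the fact that $\sF$ (and hence $\ee{f}$) extends $f$ truthfully on $\cB$. The only pitfall is to avoid mistakenly claiming $\inter(Q_f) \subseteq \inter(\ee{f})$ from the set inclusion $Q_f \subseteq \ee{f}$, which in general fails for convex sets of different dimension; this is why I would handle $Q_f$ via the $\bar{\cB}$ projection rather than as a corollary of the $\ee{f}$ case.
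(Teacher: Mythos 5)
Your argument for $\ee{f}$ is correct and is essentially the paper's own proof: both rest entirely on \Cref{thm.bdeef} (specifically $\gra_{\cB}(f) \subseteq \bd(\ee{f})$) plus convexity of $\ee{f}=\epi(\sF)$. For $Q_f$ you diverge: the paper simply says that $Q_f \subseteq \ee{f}$ implies $Q_f$ is $\hyp_{\cB}(f)$-free, whereas you reject that route and instead argue via \Cref{lem.at} that any $(x,t)\in\inter(Q_f)$ projects into $(0,1)^n$, which misses $\cB$. Your projection argument is valid, but the ``pitfall'' you cite to justify it is not real: for the \emph{topological} interior, $A \subseteq B$ always gives $\inter(A) \subseteq \inter(B)$, since $\inter(A)$ is an open set contained in $B$ and $\inter(B)$ is the largest such set; in particular, if $Q_f$ were lower-dimensional its interior would simply be empty and the claim would hold vacuously. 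The failure of monotonicity you have in mind concerns the \emph{relative} interior, which is not what the definition of $\cS$-freeness in this paper uses. So the paper's one-line deduction for $Q_f$ is sound, and your longer detour, while correct, guards against a problem that does not arise.
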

\begin{proof}
Since $\gra_{\cB}(f) \subseteq \bd(\ee{f})$, we conclude that  $\ee{f} \cap \hyp_{\cB}(f) \subseteq  \bd(\ee{f})$ and hence $\inter(\ee{f}) \cap \hyp_{\cB}(f) = \varnothing$. Additionally, $\ee{f}$ is convex and hence $\hyp_{\cB}(f)$-free. As $Q_f \subseteq \ee{f}$, $Q_f$ is $\hyp_{\cB}(f)$-free set.
\end{proof}

It is known that for a convex function, its maximal hypograph-free set is its epigraph. However, for the submodular function $f$,  we will show that its extended epigraph $\ee{f}$ is not a maximal hypograph-free set. A high-level way to test the maximality of $\ee{f}$ is as follows. The set $Q_f$ is  the convex hull of $\epi_{\cB}(f)$. Geometrically,   $Q_f$ is the ``minimal'' convex set including $\epi_{\cB}(f)$. This is a conflict as we aim to obtain an inclusion-wise ``maximal'' $\hyp_{\cB}(f)$-free set. We can remove some facets from $Q_f$ and thus enlarge this polyhedron. After removing trivial facets of  $Q_f$,  the enlarged polyhedron is the  extended envelope epigraph $\ee{f}$. However, this enlargement is still not enough.

 We look at a concrete characterization of the ``correct'' enlarging of $\ee{f}$.
The following fundamental theorem gives a sufficient and necessary condition on (maximal) hypograph-free sets including $\ee{f}$.

\begin{theorem}
\label{thm.free}
Let $\cC$ be a full-dimensional  closed convex set in $\bR^{n+1}$ including  $\ee{f}$. Then  $\cC$ is a  $\hyp_{\cB}(f)$-free set if and only if $\cC$ is $\gra_{\cB}(f)$-free. Moreover, $\cC$ is a maximal $\hyp_{\cB}(f)$-free set if and only if $\cC$ is a polyhedron and there is at least one point of  $\gra_{\cB}(f)$ in the relative interior of each facet of $\cC$.
\end{theorem}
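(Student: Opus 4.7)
The plan is to prove the two biconditionals separately, using the first as a reduction for the second.

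For the first equivalence, the implication $\hyp_{\cB}(f)$-free $\Rightarrow$ $\gra_{\cB}(f)$-free is immediate since $\gra_{\cB}(f)\subseteq \hyp_{\cB}(f)$. For the converse, I argue by contrapositive: suppose $(x,t)\in \inter(\cC)\cap \hyp_{\cB}(f)$, so $x\in\cB$ and $t\le f(x)$. Since $\ee{f}\subseteq\cC$ and the upward vertical ray $\{x\}\times[f(x),+\infty)$ lies in $\ee{f}$, I pick any $s>f(x)$ and apply the line-segment principle for convex sets to the interior point $(x,t)$ and the point $(x,s)\in\cC$: this places $(x,f(x))$ strictly between them, hence $(x,f(x))\in \inter(\cC)$, contradicting $\gra_{\cB}(f)$-freeness.

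For the maximality characterization, I first treat the forward implication. Polyhedrality follows since each $(v,f(v))$ with $v\in\cB$ lies in $\ee{f}\subseteq\cC$ but not in $\inter(\cC)$, so it admits a supporting halfspace $H_v^+$ of $\cC$; the polyhedron $\cC^\ast:=\bigcap_{v\in\cB} H_v^+$ is cut out by at most $2^n$ halfspaces, contains $\cC$, and remains $\gra_{\cB}(f)$-free since each $(v,f(v))\in \bd(H_v^+)\subseteq \bd(\cC^\ast)$. Maximality forces $\cC=\cC^\ast$. For the facet condition I argue by contrapositive: if a facet $F$ with defining inequality $a^\top z\le b$ contains no point of $\gra_{\cB}(f)$ in $\relint(F)$, I define $\cC_\epsilon$ by replacing this inequality with $a^\top z\le b+\epsilon$. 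For small $\epsilon>0$, $\cC_\epsilon\supsetneq \cC$; any $p\in \gra_{\cB}(f)\cap F$ lies in $\relbd(F)$, hence on some other facet's supporting hyperplane and thus on $\bd(\cC_\epsilon)$; any $p\in\gra_{\cB}(f)\setminus F$ is already on some other facet by $\gra_{\cB}(f)$-freeness, so also on $\bd(\cC_\epsilon)$. Hence $\cC_\epsilon$ stays $\gra_{\cB}(f)$-free, contradicting maximality.

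For the reverse implication, suppose some $\gra_{\cB}(f)$-free $\cC'\supsetneq \cC$ exists, pick $q\in \cC'\setminus \cC$ violating the inequality $a^\top z\le b$ of some facet $F$, and use the hypothesis to select $p\in \gra_{\cB}(f)\cap \relint(F)$. The goal is to show $p\in \inter(\cC')$, which will contradict $\gra_{\cB}(f)$-freeness of $\cC'$. Since $p\in \relint(F)$, for $r>0$ small we have $B(p,r)\cap \cC=B(p,r)\cap\{a^\top z\le b\}$, so $\cC'\supseteq \conv\bigl((B(p,r)\cap\{a^\top z\le b\})\cup\{q\}\bigr)$. I would then argue that this convex hull contains a full-dimensional ball around $p$: any $x\in B(p,\epsilon)$ with $a^\top x\le b$ is directly in the base, while any $x$ with $a^\top x>b$ decomposes as $x=(1-\mu)q+\mu y$ with $\mu=(a^\top x-a^\top q)/(b-a^\top q)\in (0,1)$ and $y:=(x-(1-\mu)q)/\mu$ on the hyperplane $\{a^\top z=b\}$; continuity yields $y\to p$ as $x\to p$, hence $y\in B(p,r)\cap F\subseteq \cC$ for $x$ close enough to $p$. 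The main obstacle I anticipate is exactly this local geometric step, whose subtlety is ensuring that the hyperplane intersection $y$ stays within the neighborhood of $p$ available inside $F$; the explicit formula above supplies the required continuity to make the argument work.
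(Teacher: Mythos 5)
Your proof of the first equivalence is correct and matches the paper's argument in substance: the forward direction is immediate from $\gra_{\cB}(f)\subseteq\hyp_{\cB}(f)$ (the paper proves it by a slightly more roundabout contradiction, but the content is the same), and your converse via the line-segment principle applied to $(x,t)\in\inter(\cC)$ and a point of the upward vertical ray $\{x\}\times[f(x),+\infty)\subseteq\ee{f}\subseteq\cC$ is exactly the paper's convexity argument. Where you genuinely diverge is the maximality characterization: the paper does not prove it at all, but merely cites the analogous theorem for maximal lattice-free sets (Basu et al.) and asserts that ``the proof strategy is similar'' since $\gra_{\cB}(f)$ is a finite set. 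You instead carry out the full adaptation --- supporting halfspaces at the $2^n$ graph points to force polyhedrality, relaxation of a facet lacking a graph point in its relative interior to contradict maximality, and the convex-hull/interior computation (with the explicit $\mu=(a^\top x-a^\top q)/(b-a^\top q)$ parametrization) to show that any proper $\gra_{\cB}(f)$-free enlargement would swallow a relative-interior graph point. All of these steps check out, including the observation that a point of $\relbd(F)$ lies on a second facet whose inequality survives the relaxation. What your write-up buys is a self-contained proof of the half of the theorem the paper leaves to the reader; what the paper's citation buys is brevity.

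One small caveat, which is really a gap in the theorem statement that neither you nor the paper addresses: in the ``if'' direction of the maximality claim, the hypothesis ``polyhedron with a graph point in the relative interior of each facet'' does not by itself guarantee that $\cC$ is $\gra_{\cB}(f)$-free (some other graph point could a priori sit in $\inter(\cC)$). Your argument correctly shows that no $\gra_{\cB}(f)$-free proper superset of $\cC$ exists, which yields maximality once $\cC$ itself is known to be $\hyp_{\cB}(f)$-free; it would be worth stating explicitly that freeness of $\cC$ is taken as a standing hypothesis in that direction, as is conventional in the lattice-free analogue.
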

\begin{proof}
We note that by \Cref{thm.bdeef}, $\gra_{\cB}(f) \subseteq \bd(\ee{f}) \subseteq  \ee{f} \subseteq \cC$. Thereby,  $\gra_{\cB}(f) \cap \inter(\cC) = \varnothing$ (\ie $\cC$ is $\gra_{\cB}(f)$-free) if and only if $\gra_{\cB}(f) \subseteq \bd(\cC)$.

We consider the  $\cS$-freeness first. We prove the forward direction. Assume that $\cC$ is a  $\hyp_{\cB}(f)$-free set. Suppose, to aim at a contradiction, that there exists a point $(v,f(v)) \in \inter(\cC) \cap \gra_{\cB}(f)$. Then  there exists a sufficiently small $\epsilon > 0$ such that $(v, f(v) -\epsilon) \in \inter(\cC)$, but $(v, f(v) -\epsilon)  \in  \hyp_{\cB}(f)$, which leads to a contradiction. We prove the reverse direction. Assume that $\cC$ is $\gra_{\cB}(f)$-free.  Suppose, to aim at a contradiction, that there exists a point $(v, f(v) - \delta) \in \inter(\cC)$ with $v \in \cB$ and $\delta> 0$. As, for some $\epsilon > 0$, $(v, f(v) + \epsilon) \subseteq \inter(\ee{f}) \subseteq \inter(\cC)$, by convexity of $\cC$, $(v,f(v)) \in \inter(\cC)$, which leads to a contradiction. This implies that $\cC$ is  $\hyp_{\cB}(f)$-free if and only if $\gra_{\cB}(f)$-free (or $\gra_{\cB}(f) \subseteq \bd(\cC)$).

  We consider the maximality next. Due to \cite{basu2010maximal}, a full-dimensional lattice-free set is maximal if it is a polyhedron and there is at least one lattice point in the relative interior of each facet. As $\gra_{\cB}(f)$ is a finite set, the proof strategy is similar although it is not a subset of any lattice. Then  the result follows.
\end{proof}

The above theorem is purely geometrical. Since submodular functions are combinatorial objects, we translate this theorem into a combinatorial language. We first define a combinatorial object in the Boolean hypercube $\cB$.

\begin{definition}
Let $x^0, x^1,  \cdots, x^n$ be $n+1$ distinct points of $\cB$.  They are called  \textit{monotone}, if $\mathbf{0} = x^0 < x^1 < \cdots < x^n = \mathbf{1} $.  We call the corresponding ordered set $(x^0, \cdots, x^n) \subseteq \cB$  a \textit{monotone chain} in $\cB$.
\end{definition}

 Therefore,  we use a monotone chain to represent a set of monotone points. Then  we have the following observation.

\begin{proposition}
\label{prop.one2one}
The set of monotone chains is in one-to-one correspondence to the set $\Pi$ of permutations  via the map $V$ defined as follows: for all $\pi \in \Pi$, $V(\pi):= (v^i(\pi) \;|\; i\in\mathcal{N}\cup\{0\})$.
\end{proposition}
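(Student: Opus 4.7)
The plan is to prove that $V$ is well-defined (sends permutations to monotone chains), injective, and surjective, exploiting the fact that the cardinality of support must strictly increase along a monotone chain.

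First I would verify that $V(\pi)$ is indeed a monotone chain for every $\pi \in \Pi$. By \Cref{def.perm}, $v^0(\pi) = \cha{\varnothing} = \mathbf{0}$ and $v^n(\pi) = \cha{[n]} = \mathbf{1}$. For each $i \in [n]$, $\pi([i-1]) \subsetneq \pi([i])$ (since $\pi(i) \notin \pi([i-1])$ by injectivity of $\pi$), which yields $v^{i-1}(\pi) < v^i(\pi)$ coordinatewise. Hence $\mathbf{0} = v^0(\pi) < v^1(\pi) < \cdots < v^n(\pi) = \mathbf{1}$, so $V(\pi)$ is a monotone chain.

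Next I would prove surjectivity. Given a monotone chain $(x^0, x^1, \ldots, x^n)$, a counting argument on coordinates shows that $x^i$ must have exactly $i$ ones: indeed, $|\cha{S}|$ jumps by at least one along each strict inequality, and we need exactly $n$ jumps to go from $\mathbf{0}$ to $\mathbf{1}$. Consequently, for each $i \in [n]$, the difference $x^i - x^{i-1}$ is a standard unit vector $e_{k_i}$ for a unique $k_i \in [n]$, and the indices $k_1, \ldots, k_n$ are pairwise distinct (they are the newly acquired coordinates). Defining $\pi(i) := k_i$ gives a permutation in $\Pi$, and by construction $\pi([i]) = \{k_1, \ldots, k_i\}$ is the support of $x^i$, so $v^i(\pi) = x^i$ for all $i$. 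Thus $V(\pi) = (x^0, \ldots, x^n)$.

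Finally, injectivity follows from the same construction: if $V(\pi) = V(\pi')$, then $v^i(\pi) = v^i(\pi')$ for all $i$, so the unique coordinate added at step $i$ is the same, i.e., $\pi(i) = \pi'(i)$. The step I expect to require the most care is the argument that each $x^i$ along a monotone chain has exactly $i$ ones; this is really a pigeonhole argument on the $n+1$ distinct cardinalities $|x^0| < |x^1| < \cdots < |x^n|$ lying in $\{0, 1, \ldots, n\}$, forcing $|x^i| = i$, after which the bijection essentially writes itself.
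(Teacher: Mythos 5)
Your proof is correct and follows essentially the same route as the paper's: show $V(\pi)$ is a monotone chain via the strict nesting $\pi([0])\subsetneq\cdots\subsetneq\pi([n])$, and invert by reading off $\pi(i)$ as the index of the unique nonzero entry of $x^i-x^{i-1}$. Your explicit pigeonhole argument that $|x^i|=i$ (hence that this nonzero entry is indeed unique) is a detail the paper leaves implicit, so your write-up is if anything slightly more complete.
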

\begin{proof}
By \Cref{prop.supportpoint}, since $\varnothing = \pi([0]) \subsetneq \cdots \subsetneq  \pi([n]) = [n]$, by \Cref{def.perm}, $\mathsf{0} = v^0(\pi) < \cdots < v^n(\pi) = \mathsf{1}$, so $V(\pi)$ is a monotone chain. Conversely, given a monotone chain $(x^0, \cdots, x^n)$, its inverse map $\pi$ exists and satisfies that $\pi(0) = 0$; and for all $i \in [n]$, $\pi(i)$ is the index  of the unique non-zero entry of $x^i - x^{i-1}$.
\end{proof}

We find that  permutations and monotone chains are indeed equivalent. We note that any $n+1$ distinct points from $\gra_{\cB}(f)$ are affinely independent in $\bR^{n+1}$ and hence support a hyperplane in $\bR^{n+1}$. Thereby, we can infer from \Cref{prop.supportpoint} and \Cref{prop.one2one} that \begin{corollary}
 If $(x^0, \cdots, x^n)$ is a monotone chain in $\cB$, then distinct points $ (x^0,f(x^0)), \cdots,  (x^n,f(x^n))$ of $\gra_{\cB}(f)$ define (or support) a facet of the extended envelope epigraph $\ee{f}$.
\end{corollary}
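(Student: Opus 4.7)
The plan is to combine the two propositions immediately preceding the corollary: use \Cref{prop.one2one} to pass from the monotone chain to the associated permutation, and then use \Cref{prop.supportpoint} to show that all $n+1$ lifted points lie on a common supporting hyperplane of $\ee{f}$. The remaining issue, which is the only non-mechanical part, is showing that the face cut out by this hyperplane has the correct dimension, namely $n$, so that it is in fact a facet of the full-dimensional polyhedron $\ee{f}$.

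More concretely, I would first invoke \Cref{prop.one2one} to obtain the unique permutation $\pi \in \Pi$ with $V(\pi)=(x^0,\dots,x^n)$, so that $x^i = v^i(\pi)$ for every $i \in [n]\cup\{0\}$. Applying \Cref{prop.supportpoint} to each $i$ then yields $\sigma(\pi)\, x^i = f(x^i)$, which means that every lifted point $(x^i,f(x^i))$ satisfies the linear equation $\sigma(\pi)x = t$. Since by \Cref{lem.subvert} we have $\sigma(\pi) \in \ext(\ep{f})$, the inequality $\sigma(\pi)x \le t$ is one of the defining inequalities of $\ee{f}$ according to \eqref{eq.ee}, and the hyperplane $\sigma(\pi)x = t$ supports $\ee{f}$.

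It then remains to prove that the face $F := \ee{f} \cap \{(x,t): \sigma(\pi)x = t\}$ is a facet. For this I would establish that the $n+1$ lifted points are affinely independent in $\bR^{n+1}$. The key observation is that, by \Cref{def.perm}, the successive differences in the projection satisfy $x^{i}-x^{i-1} = v^i(\pi) - v^{i-1}(\pi) = \cha{\pi([i])} - \cha{\pi([i-1])}$, which is the standard basis vector $e_{\pi(i)}$. The vectors $\{e_{\pi(i)}\}_{i \in [n]}$ are a permutation of the canonical basis of $\bR^n$, hence linearly independent, which implies that $x^1-x^0,\dots,x^n-x^0$ are linearly independent and therefore $x^0,\dots,x^n$ are affinely independent in $\bR^n$. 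Lifting these points to $\bR^{n+1}$ preserves affine independence, so $F$ contains $n+1$ affinely independent points and is therefore of dimension at least $n$.

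Finally, because $\ee{f} = \epi(\sF)$ is full-dimensional in $\bR^{n+1}$ (its recession cone contains the direction $(0,1)$, and it contains the interior points of $\epi(\sF)$ above any $x \in \bR^n$), the face $F$ cannot have dimension $n+1$; combined with $\dim F \ge n$, this forces $\dim F = n$, proving that $F$ is a facet supported by the points $(x^0,f(x^0)),\dots,(x^n,f(x^n))$. The only real subtlety is the affine independence step: the remark in the paragraph preceding the corollary asserts that \emph{any} $n+1$ distinct points of $\gra_{\cB}(f)$ are affinely independent, which is false in general, so the proof must really exploit the monotonicity of the chain (through $x^i - x^{i-1} = e_{\pi(i)}$) rather than just their distinctness.
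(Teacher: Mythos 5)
Your proof is correct and follows essentially the same route as the paper, which obtains the corollary by combining \Cref{prop.one2one} (monotone chain $\leftrightarrow$ permutation) with \Cref{prop.supportpoint} (each lifted chain point satisfies $\sigma(\pi)x = t$) and the affine independence of the $n+1$ lifted points. You are also right that the paper's supporting remark --- that \emph{any} $n+1$ distinct points of $\gra_{\cB}(f)$ are affinely independent --- is too strong (for instance, with $f \equiv 0$ and $n=3$ the four lifted points over a two-dimensional face of $\cB$ are affinely dependent), and your derivation of affine independence from $x^i - x^{i-1} = e_{\pi(i)}$, i.e.\ from the monotonicity of the chain rather than mere distinctness, is the correct repair of that step rather than a genuinely different approach.
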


We say that this monotone chain \textit{induces the facet}. In fact, we find that facets of $\ee{f}$, permutations on $[n]$, and monotone chains in $\cB$ are in one-to-one correspondence. Therefore, we can view them as the same objects. Especially,  \Cref{prop.one2one} relates permutations and monotone chains. We give the following characterization of permutations on $[n]$.

\begin{definition}
A subset $\Pi'$ of permutations of $\Pi$   is called  a cover, if $\bigcup_{\pi \in \Pi'} V(\pi) = \cB$;  moreover, $\Pi'$ is  called a minimal cover, if additionally, for all $\pi \in \Pi'$, $V(\pi) \smallsetminus \bigcup_{\pi' \in \Pi': \pi' \ne \pi} V(\pi')$ is not empty.    
\end{definition}

We want to enlarge $\ee{f}$ by removing its facets, this is equivalent to removing permutations  from $\Pi$.  Let  $\Pi'$ be a subset  of permutations of $\Pi$, and $\cC(\Pi') := \{(x,t): \forall \pi \in \Pi', \, \sigma(\pi) x \le t\}$ denotes the relaxation of the extended envelope epigraph induced by $\Pi'$. It is obvious that $\ee{f} = \cC(\Pi) \subseteq \cC(\Pi')$ for any $\Pi' \subseteq \Pi$. The following corollary translates \Cref{thm.free} in a combinatorial language.
\begin{corollary}
\label{cor.permax}
Let $\Pi'$ be a  subset  of permutations of $\Pi$. $\cC(\Pi')$ is $\hyp_{\cB}(f)$-free if and only if $\Pi'$ is a cover. $\cC(\Pi')$ is maximally $\hyp_{\cB}(f)$-free  if and only if $\Pi'$ is a minimal cover.
\end{corollary}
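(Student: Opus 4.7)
The plan is to deduce this corollary from Theorem~\ref{thm.free} by translating its two geometric conditions (being $\gra_{\cB}(f)$-free and having a point of $\gra_{\cB}(f)$ in the relative interior of each facet) into the combinatorial language of covers via Proposition~\ref{prop.supportpoint}, Corollary~\ref{cor.point}, and Proposition~\ref{prop.one2one}. I would begin by observing that $\cC(\Pi')$ is a closed convex polyhedron containing $\ee{f}$, and hence full-dimensional, so Theorem~\ref{thm.free} applies. Note also that $\gra_{\cB}(f)\subseteq\bd(\ee{f})\subseteq\cC(\Pi')$, so $\gra_{\cB}(f)$-freeness is equivalent to $\gra_{\cB}(f)\subseteq\bd(\cC(\Pi'))$.

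For the first equivalence, I would unfold the boundary condition: a point $(v,f(v))\in\gra_{\cB}(f)$ lies on $\bd(\cC(\Pi'))$ iff some defining inequality $\sigma(\pi)x\le t$ with $\pi\in\Pi'$ is tight at $(v,f(v))$, i.e.\ $\sigma(\pi)v=f(v)$. By Proposition~\ref{prop.supportpoint} together with Corollary~\ref{cor.point} (and Proposition~\ref{prop.one2one}), $\sigma(\pi)v=f(v)$ is equivalent to $v\in V(\pi)$. Thus $\gra_{\cB}(f)\subseteq\bd(\cC(\Pi'))$ iff every $v\in\cB$ belongs to $V(\pi)$ for some $\pi\in\Pi'$, which is exactly the condition $\bigcup_{\pi\in\Pi'}V(\pi)=\cB$, i.e.\ $\Pi'$ is a cover. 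Combined with Theorem~\ref{thm.free} this proves the first statement.

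For the maximality part, I would first verify that each inequality $\sigma(\pi)x\le t$ with $\pi\in\Pi'$ indeed defines a facet of $\cC(\Pi')$: the $n+1$ points $\bigl(v^i(\pi),f(v^i(\pi))\bigr)$ for $i=0,\dots,n$ all lie on the corresponding hyperplane by Proposition~\ref{prop.supportpoint}, and because the monotone chain $v^0(\pi)<\cdots<v^n(\pi)$ consists of vectors with strictly increasing Hamming weight, these lifted points are affinely independent in $\bR^{n+1}$, so the face has dimension $n$. Now assuming $\Pi'$ is a cover, Theorem~\ref{thm.free} says $\cC(\Pi')$ is maximal iff each such facet contains a point of $\gra_{\cB}(f)$ in its relative interior. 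A point $(v,f(v))$ lies in the relative interior of the facet given by $\pi$ iff $\sigma(\pi)v=f(v)$ while $\sigma(\pi')v<f(v)$ for every other $\pi'\in\Pi'$; translating with Corollary~\ref{cor.point} this reads $v\in V(\pi)\setminus\bigcup_{\pi'\in\Pi'\setminus\{\pi\}}V(\pi')$. Thus the existence of such a $v$ for each $\pi\in\Pi'$ is exactly the minimal cover condition.

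The one subtlety I anticipate is the reverse direction of the maximality argument: one must ensure that no other facet of $\cC(\Pi')$ can fail the relative interior condition. This is handled because the defining inequalities of $\cC(\Pi')$ are exactly $\{\sigma(\pi)x\le t:\pi\in\Pi'\}$ (up to repetition when different permutations induce the same vertex of $\ep{f}$, which only strengthens the argument by making the effective index set smaller), so every facet is of the form considered above. A minor bookkeeping point is that if two permutations $\pi,\pi'\in\Pi'$ yield $\sigma(\pi)=\sigma(\pi')$, they define the same facet; the minimality hypothesis rules this out, since otherwise one of them could be dropped while preserving the cover, contradicting that $V(\pi)\setminus\bigcup_{\pi''\ne\pi}V(\pi'')\ne\varnothing$. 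With this caveat cleared, Theorem~\ref{thm.free} yields the maximality equivalence.
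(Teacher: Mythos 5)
Your proof is correct and follows essentially the same route as the paper: both deduce the corollary from Theorem~\ref{thm.free} by translating ``every point of $\gra_{\cB}(f)$ lies on a facet'' into the cover condition and ``each facet contains a point of $\gra_{\cB}(f)$ in its relative interior'' into the minimal-cover condition, using Proposition~\ref{prop.supportpoint} and Proposition~\ref{prop.one2one}. In fact you are more careful than the paper's terse proof (verifying facet-ness via affine independence of the lifted chain points and handling duplicate inequalities), though both arguments implicitly use the non-degeneracy assumption that $\sigma(\pi)v<f(v)$ for $v\notin V(\pi)$.
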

\begin{proof}
First, we note that $C(\Pi')$, as a relaxation of $\ee{f}$ includes $\gra_{\cB}(f)$. Next, we assume  that $\Pi'$ is a cover. Then  points of $\gra_{\cB}(f)$ support facets of $C(\Pi')$. By \Cref{thm.free}, $C(\Pi')$ is $\hyp_{\cB}(f)$-free if and only if it is a cover. Finally,  $\Pi'$ is a minimal cover, if and only if then each facet of $C(\Pi')$ has a point of $\gra_{\cB}(f)$ in its interior. By \Cref{thm.free}, the later is equivalent to that $C(\Pi')$ is  maximally $\hyp_{\cB}(f)$-free.
\end{proof}

We now can disprove the maximality $\ee{f}$ by a counter-example. Thanks to the \Cref{cor.permax}, we can use a counting argument to show that we can remove facets from $\ee{f}$. This results in a new enlarged $\hyp_{\cB}(f)$-free  polyhedron.
 
 \begin{proposition}
  $\ee{f}$ is  not maximally hypograph-free.
 \end{proposition}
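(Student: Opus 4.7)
My plan is to reduce to the combinatorial criterion supplied by Corollary~\ref{cor.permax}: the set $\ee{f} = \cC(\Pi)$ is maximally $\hyp_{\cB}(f)$-free if and only if $\Pi$ is a minimal cover of $\cB$. It therefore suffices to exhibit some permutation $\pi^\star \in \Pi$ whose chain $V(\pi^\star)$ is already contained in $\bigcup_{\pi \neq \pi^\star} V(\pi)$, i.e., every vertex on $V(\pi^\star)$ lies on at least one other monotone chain. Dropping $\pi^\star$ then produces a strictly larger hypograph-free polyhedron $\cC(\Pi \smallsetminus \{\pi^\star\}) \supsetneq \ee{f}$.

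The key tool will be a counting argument on the map $V$ from \Cref{prop.one2one}. For a point $v \in \cB$ with $k$ ones, the permutations whose chain passes through $v$ are exactly those satisfying $\pi([k]) = \cha{v}$, and there are precisely $k!(n-k)!$ of them. The two extreme vertices $\mathbf{0}$ and $\mathbf{1}$ lie on every chain, so they are covered by all of the $n!-1 \geq 1$ remaining chains whenever $n \geq 2$. For an intermediate vertex $v^i(\pi^\star)$ with $1 \leq i \leq n-1$, the count $i!(n-i)! \geq 2$ holds for every $n \geq 3$, giving an alternative chain through the point. Consequently, for $n \geq 3$, any single permutation may be removed from $\Pi$ while preserving the covering property, so $\Pi$ is not a minimal cover and $\ee{f}$ is not maximal.

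The one delicate aspect is the degenerate low-dimensional regime. For $n = 1$ the extended envelope epigraph is a half-space and is trivially maximal; for $n = 2$ both chains $(\mathbf{0}, e_1, \mathbf{1})$ and $(\mathbf{0}, e_2, \mathbf{1})$ are forced, since each of the axis points lies on a unique chain. Thus the proposition should be read in the regime $n \geq 3$ where the submodular structure is genuinely non-degenerate. I would present the proof by either the clean general counting argument above, or, to make the enlarged polyhedron tangible, by the explicit witness for $n = 3$: for any submodular $f$ on $\{0,1\}^3$, the three permutations $\{(1,2,3),(2,3,1),(3,1,2)\}$ form a cover (their chains collectively hit every vertex of $\cB$), so the other three facets of $\ee{f}$ may be removed. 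The main obstacle, then, is not depth but care with the base cases; the counting inequality $i!(n-i)! \geq 2$ is exactly the threshold that separates maximality from non-maximality.
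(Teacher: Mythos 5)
Your proof is correct in substance and rests on the same mechanism as the paper's: reduce maximality to the minimal-cover criterion of \Cref{cor.permax} and count how many monotone chains pass through each vertex of $\cB$. The paper stops at an explicit $n=3$ counter-example (it keeps three chains forming a cover and discards the other three facets), whereas you push the counting to all $n \ge 3$ via $i!(n-i)! \ge 2$ and show that \emph{any} single permutation can be dropped; that is a genuine strengthening, and your identification of $n=2$ as the threshold where $\Pi$ is forced to be a minimal cover is a nice observation the paper does not make. One caveat you should make explicit: your general argument tacitly assumes that distinct permutations induce distinct facets of $\ee{f}$, i.e., that dropping $\pi^\star$ from $\Pi$ actually enlarges $\cC(\Pi)$. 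The paper flags exactly this by writing ``in a non-degenerate case.'' Without it the universal claim for $n \ge 3$ fails: for a modular $f$ all vectors $\sigma(\pi)$ coincide, $\ee{f}$ is a half-space whose single facet contains every point of $\gra_{\cB}(f)$ in its relative interior, and by \Cref{thm.free} it \emph{is} maximally hypograph-free even though $\Pi$ is not a minimal cover (so \Cref{cor.permax} itself needs this non-degeneracy hypothesis). Since the proposition is established by exhibiting a counter-example, it suffices to add ``for a generic (non-degenerate) submodular $f$ with $n \ge 3$'' to your conclusion; your explicit three-chain cover for $n=3$ then closes the argument exactly as in the paper.
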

 \begin{proof}
 It suffices to find a counter-example.
  Consider $n=3$, $\cB = \{0,1\}^3$, there are 6 permutations, and 6 monotone chains (see \Cref{fig.ham}). We assume that, in a non-degenerate case, the associated extended envelope epigraph $\ee{f}$ has 6 facets induced by 6 chains respectively. The vertices $(0,0,0)$ and $(1,1,1)$ are visited by all the chains, while the other vertices are visited twice each. Therefore, a chain cannot  ``exclusively'' visit a vertex, so the corresponding facet  cannot contain one point of  $\gra_{\cB}(f)$ in its relative interior. In fact, we can remove some facets from  the extended envelope epigraph. We keep three chains:
\begin{eqnarray*}
 \left(\,(0,0,0) , (0,0,1) , (0,1,1) , (1,1,1)\,\right), \\
 \left(\,(0,0,0) , (0,1,0) , (1,1,0) , (1,1,1)\,\right), \\
 \left(\,(0,0,0) , (1,0,0) , (1,0,1) , (1,1,1)\,\right).
\end{eqnarray*}
 These chains induce 3 facets such that at least one point of  $\gra_{\cB}(f)$ is in the relative interior of each facet and each point of $\cB$ is in these 3 facets, so the polyhedron defined by these 3 facets is a $\hyp_{\cB}(f)$-free set larger than $\ee{f}$.
 
 \end{proof}

\begin{figure}[t]
\centering
\captionsetup{justification=centering}
\includegraphics[width = 0.3\columnwidth]{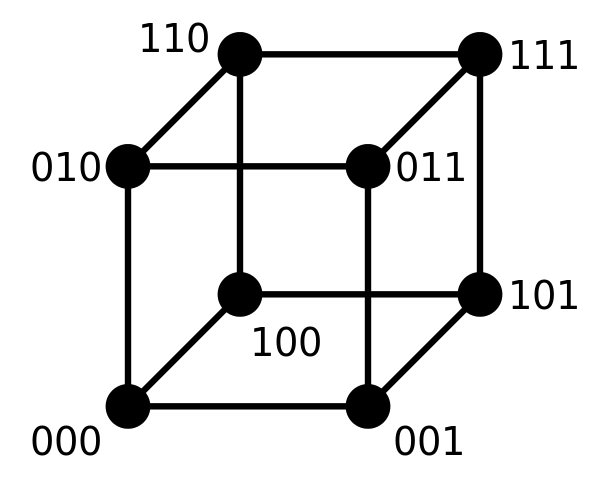}
\caption{$\cB = \{0,1\}^3$}
\label{fig.ham}
\end{figure}

We explain the hardness to enlarge $\ee{f}$. We build a bipartite graph $G:=(\cB \cup \Pi, E)$. An edge $e$ of $E$ connects a vertex $v \in \cB$ to a permutation $\pi \in \Pi$ if  $v \in V(\pi)$. Then, a minimal cover is a subset $\Pi'$  of  $\Pi$ such that i) each vertex of $\cB$ is incident to at least one permutation in $\Pi'$; ii) each permutation in $\Pi'$ is incident to a vertex of $\cB$ that no other permutation in  $\Pi'$ is incident to. As $|\cB| = 2^n$ and $|\Pi|=n!$, the size of such a graph is not polynomial to $n$. Therefore, one may need additional structural information  to enlarge $\ee{f}$ efficiently.

We relax the submodular maximization problem \eqref{eq.milp} via a polyhedral outer approximation $\cP$ of $\hyp_{\cB}(f)$. Let $X$ be the orthogonal projection of $\cP$ on $x$-space. We remark that, within a branch-and-cut algorithm, $X$ might be within a low-dimensional face of $\bar{\cB}$. Let $\relx{z}:=(\relx{x}, \relx{t})$ be a solution to the LP relaxation $\max_{(x,t) \in \cP} t$. We assume that $\relx{x} \notin \cB$, otherwise, $\relx{x}$ is already an optimal solution to \eqref{eq.milp}. The polyhedral outer approximation $\cP$ gives rise to a piece-wise linear concave overestimating function of $f$ over $X$: $\bar{f}(x) := \max_{(x,t) \in \cP} t$,
such that $\max_{(x,t) \in \cP} t = \max_{x \in X}\bar{f}(x)$. We then have the following observation.

\begin{proposition}
\label{prop.in}
Assume that $f$ is not affine over $X$, and let $x^\ast \in \relint(X)$. Then $\bar{f}(x^\ast) > \sF(x^\ast)$, \ie $(x^\ast, \bar{f}(x^\ast)) \in \inter(\ee{f})$.
\end{proposition}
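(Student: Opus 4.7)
The plan is to prove $\bar{f}(x^\ast) > \sF(x^\ast)$; since $\ee{f} = \epi(\sF)$ and $\sF$ is continuous, the interior of $\ee{f}$ is $\{(x,t) : t > \sF(x)\}$, so this strict inequality is equivalent to $(x^\ast, \bar{f}(x^\ast)) \in \inter(\ee{f})$. I would proceed in two stages: first establish the pointwise bound $\sF \le \bar{f}$ on $X$, and then upgrade it to strict inequality at $x^\ast$ by a concavity argument on the difference.

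For the pointwise inequality, recall that $\sF(x) = \max_{\pi \in \Pi}\sigma(\pi) x$ and that each linear piece $\sigma(\pi) x$ is a linear underestimator of $f$ on $\cB$ (by the definition of $\ep{f}$ together with \Cref{lem.subvert}). At every binary $v \in X \cap \cB$ this gives $\sigma(\pi) v \le f(v) \le \bar{f}(v)$, the last step because $\cP$ outer-approximates $\hyp_{\cB}(f)$ so $(v, f(v)) \in \cP$. In the branch-and-cut context considered in the paper, $X$ lies in a face of $\bar{\cB}$, so $X$ is a bounded polytope whose extreme points are binary; any $x \in X$ can therefore be written as a convex combination $x = \sum_i \lambda_i v_i$ of binary points, and concavity of $\bar{f}$ yields $\sigma(\pi) x = \sum_i \lambda_i \sigma(\pi) v_i \le \sum_i \lambda_i \bar{f}(v_i) \le \bar{f}(x)$. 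Maximizing over $\pi$ gives $\sF(x) \le \bar{f}(x)$ for all $x \in X$.

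Next I would look at $g := \bar{f} - \sF$ on $X$. This function is concave (sum of the concave $\bar{f}$ and the concave $-\sF$) and nonnegative. If $g$ vanished identically on $X$, then $\sF = \bar{f}$ would be simultaneously convex and concave on $X$, hence affine there; restricting to $X \cap \cB$ would make $f$ coincide with an affine function, contradicting the hypothesis that $f$ is not affine over $X$. So there exists $x_0 \in X$ with $g(x_0) > 0$.

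Finally, since $x^\ast \in \relint(X)$, for a sufficiently small $\epsilon > 0$ the point $y := x^\ast + \epsilon(x^\ast - x_0)$ still belongs to $X$, and $x^\ast = \tfrac{\epsilon}{1+\epsilon}\, x_0 + \tfrac{1}{1+\epsilon}\, y$ is a convex combination with strictly positive weights. Concavity of $g$ together with $g(y) \ge 0$ then gives $g(x^\ast) \ge \tfrac{\epsilon}{1+\epsilon} g(x_0) > 0$, which is the desired strict inequality. The main obstacle is the first stage: the binary-vertex bound lifts to all of $X$ only because $X$ has binary extreme points, which in turn relies on the structural fact that $X$ is contained in a face of $\bar{\cB}$; without this hypothesis one would need to replace the argument with an LP-duality argument based on the explicit half-space description of $\cP$.
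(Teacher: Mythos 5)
Your proof is correct and follows essentially the same route as the paper's: first establish $\bar{f} \ge \sF$ on $X$, then use concavity of $g := \bar{f} - \sF$ together with the relative-interior hypothesis to conclude that $g$ vanishing at $x^\ast$ would force $g \equiv 0$ and hence $f$ affine over $X$, a contradiction. You are in fact slightly more careful than the paper on the first step (the paper merely asserts $\bar{f} \ge \sF$ from the overestimator/underestimator properties, whereas you supply the convex-combination argument over the binary extreme points of $X$ and correctly flag where that structural assumption enters), and your direct convex-combination argument for strict positivity at $x^\ast$ is just the contrapositive of the paper's supporting-hyperplane contradiction.
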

\begin{proof}
As $\bar{f}$ is concave overestimator of $f$ over $X$ and $\sF$ is convex  underestimator of $f$ over $X$, $\bar{f} \ge \sF$ over $X$.  Suppose, to aim at a contradiction, that  $\bar{f}(x^\ast) = \sF(x^\ast)$. Define a concave function $g:= \bar{f} - \sF$, then for all $x \in X$, $g(x) \ge 0$, and $g(x^\ast) = 0$. By its concavity, there exists  an affine overestimating function $a$ of  $g$, such that $g(x^\ast) = a(x^\ast) = 0$, and for all $x \in X$, $0 \le  g(x) \le a(x)$. As $x^\ast \in \relint(X)$, the affinity of $a$  implies that $a = g = 0$ over $X$, \ie $\bar{f}= \sF$ over $X$. So $f$ is concave and convex over $X$ and thus affine over $X$, which is a contradiction.
\end{proof}

  The measure of the relative boundary $ \relbd(X)$ is zero, so we can assume that a mild \textit{relative interior condition} that $\relx{x} \in \relint(X)$ holds with probability one. Then  the relaxation point  $\relx{z}$ is  in the relative interior of the extended envelope epigraph with probability one.  
 
 \section{Hypograph-free and superlevel-free sets for SS functions}
\label{sec.ss}

This section considers hypograph and  superlevel sets for an SS function $f:=f_1 - f_2$, where $f_1$ and $f_2$ are two submodular functions. This  generalizes our previous results for the hypograph set of the submodular function, and thus one can generate intersection cuts for a larger family of discrete nonconvex sets.

More specifically, we consider the following nonconvex set
\begin{equation}
    \label{eq.cs}
	 \cS := \{(x, t) \in \cB \times \bR: f(x)  \ge \ell t\},
\end{equation}
with $\ell \in  \{ 0, 1\}$. Given a relaxation point $(\relx{x}, \relx{t}) \notin \cS$,
we want to find cutting planes separating this point from $\cS$.

Let $\mathsf{F_1} := \max_{s \in \ep{f_1}} sx$ and $\mathsf{F_2} := \max_{s \in \ep{f_2}} sx$ be extended envelopes of $f_1, f_2$, respectively. As $\mathsf{F_1}$ (resp. $\mathsf{F_2}$) is a convex extension of $f_1$ (resp. $f_2$),  we have that $\cS = \{(x, t) \in \cB \times \bR: \mathsf{F_1}(x) - \mathsf{F_2}(x)  \ge \ell t\}.$ By relaxing $\cB$ to $\bR^n$,  a (nonconvex) continuous outer approximation of $\cS$ is
\begin{equation}
    \label{eq.csb}
	 \bar{\cS} := \{(x, t) \in \bR^n \times \bR: \mathsf{F_1}(x) - \mathsf{F_2}(x)  \ge \ell t\}.
\end{equation}
Moreover, for all $x \in \cB$, $(x,t) \in  \bar{\cS}$ if and only if $(x,t) \in  \cS$.

\textbf{Special cases. } When $ \ell = 1$, $\cS$ is the hypograph of the SS function $f$; when $ \ell = 0$,  $\cS$ is the 0-superlevel set  of the SS function $f$. Setting $f_2 = 0$ and $ \ell = 1 $, the set $\cS$  becomes the hypograph
$
    \{(x, t) \in \cB \times \bR: f_1(x) \ge t \},
$
which is studied in the previous section. Setting $f_1 = 0$, the relaxed set $\bar{\cS}$ becomes
$
    \{(x, t) \in \cB \times \bR: \mathsf{F_2}(x) \le -\ell t\}.
$
If $(\relx{x}, \relx{t}) \notin \bar{\cS}$, since $\mathsf{F_2}(x) \ge \gamma^\ast x$ and $\mathsf{F_2}(\relx{x}) = \gamma^\ast \relx{x}$  for any $\gamma^\ast \in \partial{\mathsf{F_2}}(\relx{x})$, then the simple outer approximation cut $ \gamma^\ast x \le -\ell t$ is a valid inequality for $\bar{\cS}$ (hence for $\cS$).

In general, we should separate intersection cuts specifically for SS functions. Let $\gamma^\ast \in \partial{\mathsf{F_2}}(\relx{x})$ be a solution to \eqref{eq.sep} associated with $f_2$, and we define the  set
\begin{equation}
\label{eq.c}
	 \cC_{\relx{x}} := \{(x, t) \in \bR^n \times \bR:  \mathsf{F_1}(x) - \gamma^\ast x   \le\ell t\}.
\end{equation}
 The following proposition gives $\cS$-free sets.

\begin{proposition}
\label{prop.dsfree}
The set $\cC_{\relx{x}}$ is an $\cS$-free set. Moreover, if $(\relx{x}, \relx{t}) \notin \bar{\cS}$, then $\cC_{\relx{x}}$ does not contain $\relx{x}$ in its interior.
\end{proposition}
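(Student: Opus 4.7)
My plan is to tackle the two parts in sequence, with both steps resting on the subgradient characterisation in \Cref{prop.sub}. First, for the $\cS$-freeness of $\cC_{\relx{x}}$, I would argue by contradiction: suppose $(x,t) \in \inter(\cC_{\relx{x}}) \cap \cS$, so that the defining inequality holds strictly, $\mathsf{F_1}(x) - \gamma^\ast x < \ell t$, while simultaneously $x \in \cB$ and $f_1(x) - f_2(x) \ge \ell t$. Two ingredients then combine: on $\cB$ the extended envelopes coincide with their submodular ancestors, so $\mathsf{F_1}(x) = f_1(x)$ and $\mathsf{F_2}(x) = f_2(x)$; and by the second clause of \Cref{prop.sub}, any $\gamma^\ast \in \partial \mathsf{F_2}(\relx{x})$ is a global linear minorant of $\mathsf{F_2}$, so $\mathsf{F_2}(x) \ge \gamma^\ast x$ for every $x$. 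Chaining these yields $\mathsf{F_1}(x) - \gamma^\ast x \ge f_1(x) - f_2(x) \ge \ell t$, contradicting the strict inequality; hence $\inter(\cC_{\relx{x}}) \cap \cS = \varnothing$ and $\cC_{\relx{x}}$ is $\cS$-free.

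For the ``moreover'' part, the first clause of \Cref{prop.sub} supplies the identity $\gamma^\ast \relx{x} = \mathsf{F_2}(\relx{x})$, which substitutes directly into the linear form defining $\cC_{\relx{x}}$, giving
\begin{equation*}
\mathsf{F_1}(\relx{x}) - \gamma^\ast \relx{x} \;=\; \mathsf{F_1}(\relx{x}) - \mathsf{F_2}(\relx{x}).
\end{equation*}
The hypothesis $(\relx{x}, \relx{t}) \notin \bar{\cS}$ unpacks as $\mathsf{F_1}(\relx{x}) - \mathsf{F_2}(\relx{x}) < \ell \relx{t}$, so the defining inequality of $\cC_{\relx{x}}$ is strictly satisfied at $(\relx{x}, \relx{t})$.

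The main obstacle is interpretational rather than technical. This strict satisfaction certifies $(\relx{x}, \relx{t}) \in \inter(\cC_{\relx{x}})$, which is the opposite of the literal wording ``$\cC_{\relx{x}}$ does not contain $\relx{x}$ in its interior''. I believe this reflects a typographical slip (most plausibly either the ``$\notin$'' in the hypothesis should be ``$\in$'', or the ``not'' in the conclusion should be dropped): the ``contains'' reading is the one consistent with the intersection-cut framework of \Cref{sec.prem}, where a valid cut is generated only when the relaxation point lies strictly inside the $\cS$-free set. Modulo this correction, both parts of the proposition are direct consequences of \Cref{prop.sub} combined with the fact that $\mathsf{F_1}, \mathsf{F_2}$ restrict to $f_1, f_2$ on $\cB$, and no deeper argument is required.
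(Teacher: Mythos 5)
Your proof is correct and takes essentially the same route as the paper's: the same global minorant inequality $\gamma^\ast x \le \mathsf{F_2}(x)$ gives freeness (the paper phrases this via the relaxed set $\bar{\cS}$ and then invokes $\cS \subseteq \bar{\cS}$, whereas you restrict directly to $\cB$ using $\mathsf{F_i} = f_i$ there --- an immaterial difference), and the same subgradient equality $\gamma^\ast \relx{x} = \mathsf{F_2}(\relx{x})$ handles the relaxation point. Your typo diagnosis is also vindicated by the paper itself: its proof of the second claim concludes ``so $(\relx{x},\relx{t}) \in \inter(\cC_{\relx{x}})$'', i.e., the intended statement is that $\cC_{\relx{x}}$ \emph{does} contain the relaxation point in its interior, exactly as the intersection-cut machinery of \Cref{sec.prem} requires.
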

\begin{proof}
We first prove that $\cC_{\relx{x}}$ is $\bar{\cS}$-free. By definition, $\gamma^\ast x \le \mathsf{F_2}(x)$, which implies that $ \mathsf{F_1}(x) - \gamma^\ast x  \ge \mathsf{F_1}(x) - \mathsf{F_2}(x)  $. Therefore, for $(x,t) \in \inter(\cC_{\relx{x}})$, we have that $\ell t >   \mathsf{F_1}(x) - \gamma^\ast x  \ge \mathsf{F_1}(x) - \mathsf{F_2}(x) $, which implies that $(x,t) \notin \bar{\cS}$. Hence, $\inter(\cC_{\relx{x}}) \cap \bar{\cS} = \varnothing$. Additionally, $\cC_{\relx{x}}$ is convex. These two facts imply that $\cC_{\relx{x}}$ is $\bar{\cS}$-free. Since $\cS \subseteq \bar{\cS}$, $\cC_{\relx{x}}$ is also an $\cS$-free set. Next, assume that $(\relx{x}, \relx{t}) \notin \bar{\cS}$, then $ \ell \relx{t} >  \mathsf{F_1}(\relx{x}) - \mathsf{F_2}(\relx{x}) \le   \mathsf{F_1}(\relx{x}) - \gamma^\ast\relx{x} $, so $(\relx{x},\relx{t}) \in \inter(\cC_{\relx{x}})$.
\end{proof}

In \cite{munoz2022maximal,serrano2019,xusignomial}, the authors study  the sub/superlevel sets of some DC functions.
Their construction of $\cS$-free sets relies on a common reverse-linearization technique: reverse the set $\cS$ by changing the sign of its defining inequality, and linearize one  convex function.

In  our case, $f$ is an SS function, so we first need to extend the submodular and supermodular components of $f$. After the extension, we obtain a DC function. Then, we can  apply the reverse-linearization technique to its continuous extension.

\section{Two applications}
\label{sec.app}
In this section, we discuss applications of intersection cuts to Boolean multilinear programming and D-optimal design. We exploit  submodular structures in these two problems.

\subsection{Boolean multilinear constraints}
\label{sec.bmc}
We consider \leo{the} construction of $\cS$-free sets for Boolean multilinear constraints.
Since $x\in\{0,1\}\Leftrightarrow x^2=x$, a  polynomial function defined on binary variables is equivalent to a  multilinear function on binary variables.   A Boolean multilinear function is sometimes called a pseudo Boolean function.

 A similar case is the construction of $\cS$-free sets  for continuous quadratic constraints \cite{munoz2022maximal}. We call this construction the ``continuous approach''. It applies eigenvalue decomposition to factor the symmetric matrices representing quadratic terms in a quadratic constraint. After factoring, the reformulated constraint contains a DC function. This reformulation  is amenable to the reverse-linearization technique, by which one obtains the so-called continuous-quadratic-free sets \cite{munoz2022maximal}. Multilinear terms, however, are represented by tensors. It is doubtful whether this construction can be extended so as to produce DC functions from tensors.
 
Here we consider an alternative discrete approach. It exploits the submodularity and the supermodularity of Boolean multilinear functions. In \cite{billionnet1985maximizing,nemhauser1978analysis}, a class of Boolean multilinear functions is shown to be supermodular. We give a submodular-supermodular decomposition for general Boolean multilinear functions in the following.

\begin{proposition}
\label{lem.supml}
Given a Boolean multilinear function $f: \cB \to \bR$ defined as $f(x) := \sum_{k \in [K]} a_k \prod_{j \in A_k}x_j$  with $K$ multilinear terms, where  $A_k \subseteq [n]$. Let $f = f_1 - f_2$ where $f_1(x):=\sum\limits_{k \in [K]\atop a_k < 0} a_k \prod\limits_{j \in A_k}x_j$ and $f_2(x):=-\sum\limits_{k \in [K]\atop a_k > 0} a_k \prod\limits_{j \in A_k}x_j$.  Then  $f_1,f_2$ are submodular over $\cB$.
\end{proposition}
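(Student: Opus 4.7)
The plan is to reduce everything to a single fact about one monomial: a pure multilinear term $m_A(x) := \prod_{j \in A} x_j$ with $A \subseteq [n]$ is supermodular on $\cB$. Once this is established, the decomposition $f = f_1 - f_2$ falls out by the closure of the submodular cone under nonnegative combinations.

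First, I would prove the key lemma that $m_A$ is supermodular on $\cB$, i.e.\ $m_A(x \vee y) + m_A(x \wedge y) \ge m_A(x) + m_A(y)$ for all $x,y \in \cB$, where the lattice operations are taken componentwise. This is a three-case check on the Boolean values of $m_A(x)$ and $m_A(y)$: if both equal $1$, then $x_j = y_j = 1$ for all $j \in A$ and both sides equal $2$; if exactly one equals $1$, then $m_A(x \vee y) = 1$ (the ``$1$''-point already forces every coordinate of $A$ to be $1$ in the join) while $m_A(x \wedge y) = 0$, so both sides equal $1$; if both equal $0$, the right-hand side is $0$ while the left-hand side is nonnegative. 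Equivalently, $-m_A$ is submodular on $\cB$.

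Next, I would invoke the standard fact that submodular functions on $\cB$ form a cone closed under addition and multiplication by nonnegative scalars. Writing $a_k = -|a_k|$ when $a_k < 0$ and $a_k = |a_k|$ when $a_k > 0$, we obtain
\begin{equation*}
f_1 \;=\; \sum_{k \in [K]\atop a_k<0} a_k\, m_{A_k} \;=\; \sum_{k \in [K]\atop a_k<0} |a_k|\,(-m_{A_k}),\qquad f_2 \;=\; \sum_{k \in [K]\atop a_k>0} a_k\,(-m_{A_k}),
\end{equation*}
so both are nonnegative combinations of the submodular functions $-m_{A_k}$, hence submodular. The identity $f = f_1 - f_2$ is immediate from the defining sums.

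The only nontrivial step is the supermodularity of a single monomial, and even that amounts to a short case analysis thanks to the $\{0,1\}$-valuedness of $m_A$. I do not foresee any genuine obstacle; the main thing to be careful about is the bookkeeping of signs so that after moving absolute values outside, every term inside the sum is a \emph{nonnegative} multiple of a submodular function (rather than a nonpositive multiple of a supermodular function, which would fail to preserve submodularity).
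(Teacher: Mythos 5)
Your proof is correct and follows essentially the same route as the paper: both reduce the claim to the supermodularity of a single monomial $\prod_{j\in A_k}x_j$ on $\cB$ and then conclude via closure of submodular functions under nonnegative combinations (equivalently, that a negative combination of supermodular functions is submodular). The only difference is in how the key lemma is established — the paper cites Topkis's result that Cobb--Douglas functions are supermodular on $\bR^n_+$ together with preservation of supermodularity under restriction to the Cartesian-product subdomain $\cB$, whereas you verify the lattice inequality directly by a three-case Boolean check, which is a self-contained and equally valid alternative.
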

\begin{proof}
Given a Cartesian product set $D:=\prod_{j \in [n]}D_j$ ($D_j \subseteq \bR$), a function $g: D  \to \bR$ is a generalized supermodular function over $D$, if
for every $x, y\in D$, $g(x) + g(y) \le g\left ( x \lor y \right) + g\left(x \land y\right)$.
Each multilinear term function $\prod_{j \in A_k}x_j$ is a Cobb-Douglas function \cite{topkis2011supermodularity}, which is a  generalized supermodular function over $\bR^n_+$. It is known \cite{topkis2011supermodularity} that, if restricting the domain (e.g. $\bR^n_+$)  to its subdomain (e.g. $\cB$) still yields a Cartesian product set, then the supermodularity is preserved. Moreover, a negative combination of supermodular functions is a submodular function. Therefore, $f_1,f_2$ are submodular functions over $\cB$.
\end{proof}

Since every Boolean multilinear function is an SS function, we can construct $\cS$-free sets for the corresponding superlevel set or hypograph set.

\begin{corollary}
\label{prop.bm}
Given a multilinear function $f: \cB \to \bR:x \to f(x) := \sum_{k \in [K]} a_i \prod_{j \in A_k}x_j$ ($A_k \subseteq [n]$) as in \Cref{lem.supml}, assume that $f = f_1 - f_2$ where $f_1(x):=\sum_{k \in [K]\atop a_k < 0} a_k \prod_{j \in A_k}x_j$ and $f_2(x):=\sum_{k \in [K]\atop a_k > 0} -a_k \prod_{j \in A_k}x_j$.  Let $\cS$, $\overline{\cS}$, and $\cC_{\relx{x}}$ be as \eqref{eq.cs}, \eqref{eq.csb}, \eqref{eq.c}, respectively. Then, the set $\cC_{\relx{x}}$ is an $\cS$-free set. Moreover, if $\relx{x} \notin \overline{\cS}$, then $\cC_{\relx{x}}$ does not contain $\relx{x}$ in its interior.
\end{corollary}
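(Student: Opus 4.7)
The plan is to obtain this corollary as an immediate specialization of \Cref{prop.dsfree} once \Cref{lem.supml} has supplied an admissible submodular-supermodular decomposition of the Boolean multilinear function $f$. Since the definitions of $\cS$, $\overline{\cS}$, and $\cC_{\relx{x}}$ copy verbatim from \eqref{eq.cs}, \eqref{eq.csb}, and \eqref{eq.c}, essentially no new machinery is required; the task reduces to certifying that the hypotheses of \Cref{prop.dsfree} hold in the multilinear setting.

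First I would invoke \Cref{lem.supml} to certify that $f_1$ and $f_2$, as assembled from the negative-coefficient and positive-coefficient terms of $f$, are both submodular functions on $\cB$. The extended envelopes $\mathsf{F_1}$ and $\mathsf{F_2}$ from \Cref{sec.extension} are then well-defined convex extensions of $f_1, f_2$ from $\cB$ to $\bR^n$, and by \Cref{prop.sub} a subgradient $\gamma^\ast \in \partial \mathsf{F_2}(\relx{x})$ exists with $\gamma^\ast \relx{x} = \mathsf{F_2}(\relx{x})$. This is precisely the setup of \Cref{prop.dsfree}.

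With this setup in place, the first conclusion---that $\cC_{\relx{x}}$ is $\cS$-free---is an instance of the first half of \Cref{prop.dsfree}. For the second conclusion, reading the abbreviation $\relx{x} \notin \overline{\cS}$ as shorthand for $(\relx{x}, \relx{t}) \notin \overline{\cS}$ to match the ambient space of $\overline{\cS}$ and $\cC_{\relx{x}}$, the same chain of inequalities $\ell \relx{t} > \mathsf{F_1}(\relx{x}) - \mathsf{F_2}(\relx{x}) = \mathsf{F_1}(\relx{x}) - \gamma^\ast \relx{x}$ used in the proof of \Cref{prop.dsfree} locates $(\relx{x}, \relx{t})$ relative to $\cC_{\relx{x}}$ exactly as asserted for the SS case.

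The main obstacle is checking that the sign-partitioned functions $f_1$ and $f_2$ really are submodular on $\cB$, which is delicate because $f_1$ aggregates the negative-coefficient monomials (so $f_1 \le 0$) while $f_2$ aggregates the negated positive-coefficient monomials (so $f_2 \le 0$ as well). The Cobb--Douglas/restriction argument in \Cref{lem.supml} dispatches this: each monomial $\prod_{j \in A_k} x_j$ is supermodular on $\bR^n_+$, supermodularity is preserved under restriction to the Cartesian product $\cB$, and a nonnegative combination of supermodular functions remains supermodular, so negating such a combination yields a submodular function on $\cB$. Once that verification is secured, the corollary follows with no further work.
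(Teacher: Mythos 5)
Your proposal is correct and follows exactly the paper's own route: the paper's proof is a two-line application of \Cref{lem.supml} (submodularity of the sign-partitioned components $f_1, f_2$) followed by \Cref{prop.dsfree}, which is precisely your argument. Your additional care---noting that $\gamma^\ast \in \partial \mathsf{F_2}(\relx{x})$ exists by \Cref{prop.sub}, and reading $\relx{x} \notin \overline{\cS}$ as $(\relx{x},\relx{t}) \notin \overline{\cS}$ to match the ambient space---only fills in details the paper leaves implicit.
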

\begin{prf}
By \Cref{lem.supml}, we know that both $f_1$ and $f_2$ are submodular. Hence, the result follows by applying  \Cref{prop.dsfree}.
\end{prf}

Importing the notation in \Cref{lem.supml}, a BMP problem has the following form:
\begin{subequations}
\label{bmp}
\begin{alignat}{2}
	\max &&\quad t \\
&& \quad \sum_{k \in \cK_0} a_{ik} \prod_{j \in A_k}x_j &\ge  t  \label{bmp.obj}\\
  \forall i \in  [m] && \quad \sum_{k \in \cK_i} a_{ik} \prod_{j \in A_k}x_j & \ge 0  \label{bmp.cons}\\
  \forall  j \in [n] && \quad 	x_j  & \in \{0,  1\},
\end{alignat}
\end{subequations}
where $m$ is the number of constraints, $K$ is the number of  distinct multilinear terms in the BMP, $\cK_i \subseteq [K]$ is the index set of multilinear terms in the $i$-th constraint ($0$ for objective). Unconstrained BMP has several synonyms: \pbm  or \textsc{multilinear unconstrained binary optimization (MUBO)}.

To construct $\cS$-free sets for Boolean multilinear constraints in the BMP, we need to write them  as the standard form \eqref{eq.cs}.
For all $i \in [m]$ or $i = 0$, let $$f_i(x) := \sum_{k \in \cK_i} a_{ik} \prod_{j \in A_k} x_j,$$ and write $$f_i(x)= f_{i1}(x) - f_{i2}(x),$$ where $f_{i1}:= \sum_{k \in \cK_i:a_{ik} < 0} a_{ik} \prod_{j \in A_k} x_j$ and $f_{i2}:= -\sum_{k \in \cK_i:a_{ik} > 0} a_{ik} \prod_{j \in A_k} x_j$ are two  submodular functions.  

The objective and constraints of \eqref{bmp} can be represented as $$f_{i1}(x) - f_{i2}(x) \ge \ell_i t$$ (for all $i \in [m]$, $\ell_i = 0$, and $\ell_0$ = 1), which, by \Cref{prop.bm}, is in the standard form.

Separating intersection cuts requires LP relaxations or corner polyhedra.
 One can first lift multilinear terms to obtain an extended formulation:
 \begin{subequations}
\label{eq.bmplift}
\begin{alignat}{2}
	\max && \quad t\\
&& \quad \sum_{k \in \cK_0} a_{0k}  y_k &\ge  t  \label{bmplift.obj}  \\
  \forall i \in  [m] && \quad \sum_{k \in \cK_i} a_{ik}  y_k & \ge 0  \label{bmplift.cons}\\
	\quad  k \in [K] && \quad y_k &=  \prod_{j \in A_k}x_j \label{bmplift.monomial} \\
     \forall  j \in [n] && \quad 	x_j &\in \{0,  1\}
\end{alignat}
\end{subequations}

The standard Boolean linearization technique  \cite{crama1993concave} can reformulate a  multilinear term $\prod_{j \in A_k}x_j $ by  its underestimators and overestimators:
 \begin{subequations}
 \label{eq.stdlinear}
\begin{alignat}{2}
    \forall j \in A_k && \quad y_k  & \le x_j  \label{eq.stdlinear.over}\\
           		 && \quad  y_k  & \ge |A_k| + 1 - \sum_{j \in A_k} x_j \label{eq.stdlinear.under}\\
           		 && \quad y_k & \in [0,1]\\
    \forall j \in A_k && \quad x_j & \in \{0,1\},  		 
\end{alignat}
\end{subequations}
where $|A_k|$ is the cardinality of $A_k$. Then,  by linearizing each nonlinear constraint \eqref{bmplift.monomial}  as linear constraints in \eqref{eq.stdlinear}, one obtains a MILP reformulation of \eqref{eq.bmplift}.

To construct LP relaxations and corner polyhedra,  one can simply drop the integrality constraints $x_j \in \{0,1\}$. The direct LP relaxation for the MILP reformulation is also  an  LP relaxation for the BMP \eqref{eq.bmplift}. This gives us a corner polyhedron in the extended space $(x,y,t)$. The $\cS$-free set lives in a projected space (\ie $(x,t)$-space).  By extracting $(x,t)$ entries of rays of the corner polyhedron, we  project the corner polyhedron into the $(x,t)$-space.

Given a corner polyhedron, it is straightforward to construct intersection cuts for the BMP: we  separate intersection cuts constructed from  the $\cS$-free sets given by \Cref{prop.dsfree}.

We note that Boolean quadratic constraints belong to Boolean multilinear constraints, and continuous  quadratic constraints relax Boolean quadratic constraints. Both the continuous and discrete approaches can construct valid $\cS$-free sets for Boolean quadratic constraints. We remark that  maximal continuous-quadratic-free sets are no longer maximally Boolean-quadratic-free. It is easy to see that the  discrete approach  preserves the term-wise sparsity patterns of the SS functions and requires no factorizations. Therefore,  the  discrete approach is computationally amenable to ill-conditioned or sparse coefficient matrices.

\subsection{D-optimal design}
\label{sec.dopt}
In statistical estimation, optimal designs are a class of experimental designs that are optimal
with respect to some statistical criterion. We derive an extended convex MINLP  formulation for the \bdopt problem. In this formulation, the problem is a cardinality-constrained submodular maximization problem.

Let $\bS^m$ denote the set of $m$-by-$m$ symmetric matrices, and let $\bS^m_{+}$ (resp. $\bS^m_{++}$)  denote the set of $m$-by-$m$ positive semi-definite (resp. positive definite) matrices.
Given a set of  full row-rank matrices $\{M_j \in \bR^{m \times r_k}\}_{j \in  [n]}$, an
optimal design problem usually has the following form:
\begin{subequations}
\label{eq.optimal}
\begin{alignat}{2}
    \max  && \quad \Phi \left(\sum_{j \in  [n]}M_j \t{M_j} x_j \right)\\
      && \quad   \sum_{j \in [n]}x_j = k\\   
  \forall  j \in [n] && \quad     x_j  \in \{0,  1\},
\end{alignat}
\end{subequations}
where $k$ is the size of the design and $\Phi: \bS^m \to \bR$ is the design criterion. The matrix $M(x) := \sum_{j \in  [n]}M_j \t{M_j} x_j$ is called the \textit{information matrix}. For the D-optimal criterion \cite{bouhtou2010submodularity,sagnol2015computing}, $\Psi$ is  the log determinant function  $\ldet$.

People usually study \bdopt, where a statistical prior on the parameters $\{M_i\}_{i \in [n]}$ adds a regularization term $\epsilon I$  into the information matrix $M(x)$. This additional term is also due to the well-posedness: when $x = 0$, $ \ldet (\epsilon I) $ is well defined.
Then, the submodular maximization version of the \bdopt problem  has the following  formulation:
\begin{subequations}
\label{eq.optimal2}
\begin{alignat}{2}
    \max  && \quad \ldet \left(\epsilon I + \sum_{j \in  [n]} M_j \t{M_j} x_j  \right) \\
      && \quad   \sum_{j \in [n]}x_j = k\\   
  \forall  j \in [n] && \quad     x_j  \in \{0,  1\},
\end{alignat}
\end{subequations}

The log determinant function is concave and has a semi-definite programming (SDP) and geometric programming representation \cite{aps2018mosek}.  The scalability 
 of the mixed-integer log determinant formulation above is limited by the current state of SDP solvers. Based on the second order cone representation of the determinant function $\det(M(x))$ \cite{sagnol2015computing}, we give an extended formulation for \eqref{eq.optimal2}:
\begin{subequations}
\label{ref.dopt}
  \begin{alignat}{2}
   	 \max \quad &&  t  \\
  	 \quad && t \le \sum_{i \in [m]} \log(J_{ii})  \label{ref.log}\\
      \quad && \sum_{j \in [n] \cup \{0\}} M_j Z_j  = J \\
  	 \quad &&  J \textup{ is lower triangular} \\
  	 \quad  j \in [n] \cup \{0\} \, j \in [m] &&\norm{Z_j e_i}^2 \le u_{ji} x_j  \label{ref.soc}\\
      \quad i \in [m] &&  \sum_{j \in [n] \cup \{0\}} u_{ji} \le J_{ii}\\
      \quad && \sum_{j \in [n]} x_j = k \\
   		 \quad  && x \in \{1\} \times \cB  \\
       		 \quad &&  J \in \bR^{m \times m}\\
      \quad  j \in [n] && Z_j \in \bR^{r_j \times m}\\
      \quad j \in [n] \cup \{0\} \, i \in [m] &&  u_{ji} \in \bR_+^{r_j \times m},
  \end{alignat}
\end{subequations}
where  $M_0 = \epsilon^{1/2}I$ is an auxiliary matrix.
One can represent this formulation by low-dimensional convex cones \cite{aps2018mosek}, \eg (rotated) second-order cones, and exponential cones. Therefore, this extended formulation is amenable to computation.

\begin{proposition}
\eqref{ref.dopt} is equivalent to \eqref{eq.optimal2}, and the objective of \eqref{ref.dopt} is submodular w.r.t. $x$.
\end{proposition}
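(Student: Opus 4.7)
The plan is to verify that \eqref{ref.dopt} is a rotated second-order-cone and exponential-cone reformulation of \eqref{eq.optimal2} in the spirit of \cite{sagnol2015computing}, and that the induced objective $t^\ast(x) := \max\{t : (x, t, J, Z, u) \text{ feasible for \eqref{ref.dopt}}\}$ equals $\ldet M(x)$, where $M(x) := \epsilon I + \sum_{j \in [n]} M_j M_j^\top x_j$. Using $x_0 = 1$ and $M_0 = \epsilon^{1/2} I$, we may rewrite $M(x) = \sum_{j \in [n] \cup \{0\}} M_j M_j^\top x_j$. Since $\ldet M(x)$ is a classical submodular function on $\cB$, the submodularity of the objective of \eqref{ref.dopt} in $x$ will follow once the equivalence is established.

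For the upper bound, I will prove by induction on $k \in [m]$ that $\prod_{i=1}^k J_{ii} \le \det M(x)_{[1:k]}$, where $M(x)_{[1:k]}$ denotes the leading $k \times k$ principal submatrix. The argument starts from the identity $J_{kk} = \sum_j e_k^\top M_j Z_j e_k = \sum_j \langle M_j^\top e_k, Z_j e_k \rangle$ and applies Cauchy--Schwarz with weights $\sqrt{x_j}$ and $1/\sqrt{x_j}$; invoking $\|Z_j e_k\|^2 \le u_{jk} x_j$ and $\sum_j u_{jk} \le J_{kk}$ yields
\begin{equation*}
J_{kk}^2 \le \Bigl(\sum_j \|M_j^\top e_k\|^2 x_j\Bigr) \Bigl(\sum_j \|Z_j e_k\|^2 / x_j\Bigr) \le M(x)_{kk} \cdot J_{kk}.
\end{equation*}
For $k > 1$, the lower-triangular constraints $\sum_j e_\ell^\top M_j Z_j e_k = 0$ ($\ell < k$) further restrict $(Z_j e_k)_j$ to a subspace, and constrained Cauchy--Schwarz---or equivalently a QR-style orthogonalization against the vectors $(M_j^\top e_\ell)_j$ for $\ell < k$---strengthens the bound to $J_{kk} \le \det M(x)_{[1:k]} / \det M(x)_{[1:k-1]}$, which is the classical Schur-complement formula. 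Multiplying across $k$ telescopes to $\prod_i J_{ii} \le \det M(x)$, and hence $t \le \sum_i \log J_{ii} \le \ldet M(x)$.

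For the reverse direction, given feasible $x$ for \eqref{eq.optimal2}, I will exhibit $(J, Z, u)$ making every inequality above tight: take $J$ lower triangular with $J_{ii} := L_{ii}^2$, where $L$ is the Cholesky factor of $M(x)$, determine the subdiagonal entries of $J$ from the orthogonality conditions underlying the Schur-complement step, solve $\sum_j M_j Z_j = J$ column-wise for $Z_j$ by aligning $(Z_j e_i)_j$ with the appropriate projection of $(M_j^\top e_i)_j$, and set $u_{ji} := \|Z_j e_i\|^2 / x_j$ (or $0$ if $x_j = 0$). This achieves $\sum_i \log J_{ii} = \log \prod_i L_{ii}^2 = \ldet M(x)$, matching the upper bound, so the two problems share the same optimal $x$ and the same optimal value (under the straightforward convention that $x_0$ is fixed to $1$).

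The submodularity of $x \mapsto \ldet M(x)$ on $\cB$ is classical: the marginal $\ldet M(S \cup \{j\}) - \ldet M(S) = \log \det\bigl(I + M_j^\top M(S)^{-1} M_j\bigr)$ is nonincreasing in $S$, because $S \subseteq T$ implies $M(S) \preceq M(T)$, hence $M(T)^{-1} \preceq M(S)^{-1}$, and $\ldet$ is PSD-monotone; see \cite{SAGNOL2013258,shamaiah2010greedy}. The main obstacle will be the constrained Cauchy--Schwarz step in the induction, which must be carried out carefully enough that the triangularity constraints on $J$ produce exactly the Schur-complement correction $\det M(x)_{[1:k]}/\det M(x)_{[1:k-1]}$; once this is done cleanly, the two directions of the equivalence are symmetric and the reverse direction reduces to reversing the Cauchy--Schwarz inequalities at the tight configuration.
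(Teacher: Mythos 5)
Your proposal is correct, but it takes a much more self-contained route than the paper: the paper's proof is essentially a two-line citation, observing that \eqref{ref.dopt} is the logarithmic transform of the second-order-cone representation of $\det M(x)$ established in \cite{sagnol2015computing}, and that submodularity of $x \mapsto \ldet M(x)$ follows from \cite{SAGNOL2013258,shamaiah2010greedy}. What you propose is a full re-derivation of the cited representation theorem. Your plan is sound: the ``constrained Cauchy--Schwarz'' step you flag as the main obstacle goes through cleanly if you work in the stacked space, setting $V_\ell := (\sqrt{x_j}\, M_j^\top e_\ell)_j$ and $W := (Z_j e_k/\sqrt{x_j})_j$ (summing only over $j$ with $x_j>0$; for $x_j=0$ the constraint $\norm{Z_je_i}^2 \le u_{ji}x_j$ forces $Z_je_i=0$, so those blocks vanish --- a case your forward direction should state explicitly). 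Then $\langle V_\ell, V_{\ell'}\rangle = M(x)_{\ell\ell'}$, the triangularity of $J$ says $W \perp V_1,\dots,V_{k-1}$, and ordinary Cauchy--Schwarz applied to $J_{kk} = \langle P V_k, W\rangle$ (with $P$ the projection onto $\mathrm{span}(V_1,\dots,V_{k-1})^\perp$) together with $\norm{W}^2 \le J_{kk}$ gives $J_{kk} \le \norm{PV_k}^2$, which equals the Schur-complement ratio by the Gram-determinant identity; the reverse direction takes $W = PV_k$. Note that nondegeneracy of these Gram determinants is exactly what the regularization $M_0 = \epsilon^{1/2}I$, $x_0=1$ buys. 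Your submodularity argument via the monotone-decreasing marginal $\log\det(I + M_j^\top M(S)^{-1}M_j)$ is the standard one and matches what the paper's references contain. The trade-off: the paper's proof is short but opaque without the references in hand, while yours is verifiable in place at the cost of reproving a known theorem; for the purposes of this paper the citation suffices, but your derivation would be the right content for an appendix if self-containment were desired.
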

\begin{proof}
One can modify the original D-optimal design problem by adding a slack variable $x_0 = 1$.
Applying the logarithmic transformation to results in \cite{sagnol2015computing}, \eqref{ref.dopt} is equivalent to \eqref{eq.optimal2}.  It follows from  \cite{SAGNOL2013258,shamaiah2010greedy} that \eqref{ref.dopt} is submodular w.r.t. $x$.
\end{proof}

A global optimization solver like \texttt{SCIP} can linearize the constraints in the extended formulation \eqref{ref.dopt}, and thus produces an LP relaxation in the extended space. We can  obtain a corner polyhedron as the approach dealing with the BMP. Then, we can construct intersection cuts from  hypograph-free sets.

\section{Separation problem}
\label{sec.sep}
In this section,  we consider the separation problem to generate an intersection cut using an $\cS$-free set. Summarizing the previous sections, the $\cS$-free set is in the  form of $$
    \cC := \{(x,t) \in \bR^n \times \bR: \mathsf{G}(x) \le \ell t\},
$$
where  $\mathsf{G}(x) = \max_{s \in \ext(\ep{g})} sx$ is the extended envelope of some submodular function $g$ over $\cB$ and $ \ell \in \{0, 1\}$. We remark that the extended envelope epigraph $\ee{f}$ in \eqref{eq.ee} is a special case with $ \ell = 1$ and $g=f$; the set $\cC_{\relx{x}}$ in  \eqref{eq.c} is also a special case that $g(x)= f_1(x) - \gamma^\ast x$.

Assume that $z^\ast:=(\relx{x}, \relx{t})$ is a vertex of a corner polyhedron $\cR$, and $z^\ast \in \inter(\cC)$. Recalling the cut coefficient formula in \Cref{sec.prem}, the separation problem is reduced to calculate the step length along each ray $r^j$:
\begin{equation}
\label{eq.supstep}
\eta_j^\ast = \sup_{\eta_j \ge 0}\{\eta_j: z^\ast + \eta_j r^j \in \cC\}.   
\end{equation}
This line search problem asks for the step length to the border of $\cC$ along the ray $r^j$ from the interior point $z^\ast$.   We denote by $r^j_x, r^j_t$ the projection of $r^j$ on $x$- and $t$- spaces. Looking at the function defining $\cC$, the intersection step length $\eta_j^\ast$ is the zero point of the following function:
$$\zeta^j:\bR_+ \to \bR: \eta_j \to \zeta^j(\eta_j) := \ell  (\relx{t} + r^j_t \eta_j) -  \mathsf{G}(\relx{x} + r^j_x  \eta_j ).$$
This function enjoys the following properties.

\begin{proposition}
\label{prop.uni}
$\zeta^j$ is a  concave  piece-wise linear function over  $[0,+\infty]$ with  $\zeta^j(0) > 0$.  If $\eta^\ast_j < \infty$ and there exists an $\eta'_j > 0$ with $\zeta^j(\eta'_j) = 0$, then $\eta'_j = \eta^\ast_j$, \ie the solution $\eta^\ast_j$ must be unique.  For all $s^\ast \in \argmax_{s \in \ext(\ep{g})} s (\relx{x}+\eta_j r^j_x)$, $\ell r^j_t- s^\ast r^j_x$ is a subgradient in $\partial \zeta^j( \eta_j)$. For $\eta_j > \eta^\ast_j$, $\partial \zeta^j( \eta_j) \le \partial \zeta^j( \eta^\ast_j)$.
\end{proposition}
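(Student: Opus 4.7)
The plan is to analyze $\zeta^j(\eta_j)=\ell(\relx{t}+\eta_j r^j_t)-\mathsf{G}(\relx{x}+\eta_j r^j_x)$ as the restriction of an affine map minus the extended envelope $\mathsf{G}$ to the ray $\{\relx{x}+\eta_j r^j_x:\eta_j\ge 0\}$, and read off each claim from known facts about $\mathsf{G}$. Since $\mathsf{G}=\max_{s\in\ext(\ep{g})}sx$ is a pointwise maximum of finitely many linear functions, its restriction to the ray is a maximum of finitely many affine functions of $\eta_j$, hence convex and piece-wise linear; subtracting it from $\eta_j\mapsto\ell(\relx{t}+\eta_j r^j_t)$ yields a concave piece-wise linear function on $[0,+\infty)$. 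The strict inequality $\zeta^j(0)>0$ follows immediately from the hypothesis $z^\ast\in\inter(\cC)$, since the interior of $\cC$ is $\{(x,t):\mathsf{G}(x)<\ell t\}$.

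For uniqueness of a positive zero when $\eta^\ast_j<\infty$, I would use a three-point concavity argument. By definition $\eta^\ast_j=\sup\{\eta_j\ge 0:\zeta^j(\eta_j)\ge 0\}$, so for every $\delta>0$ there exists $\eta_j\in(\eta^\ast_j,\eta^\ast_j+\delta)$ with $\zeta^j(\eta_j)<0$. Suppose $\eta'_j>0$ is another zero. If $\eta'_j>\eta^\ast_j$, pick such an $\eta_j\in(\eta^\ast_j,\eta'_j)$ and write $\eta_j$ as a strict convex combination of $0$ and $\eta'_j$; concavity then forces $\zeta^j(\eta_j)>0$, contradicting $\zeta^j(\eta_j)<0$. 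If $\eta'_j<\eta^\ast_j$, pick $\eta''_j\in(\eta'_j,\eta^\ast_j]$ with $\zeta^j(\eta''_j)\ge 0$ and write $\eta'_j$ as a strict convex combination of $0$ and $\eta''_j$; concavity again forces $\zeta^j(\eta'_j)>0$, a contradiction. Hence $\eta'_j=\eta^\ast_j$.

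For the subgradient identification, the plan is a direct supergradient check. For any $s^\ast\in\argmax_{s\in\ext(\ep{g})}s(\relx{x}+\eta_j r^j_x)$, one has $\mathsf{G}(\relx{x}+\eta_j r^j_x)=s^\ast(\relx{x}+\eta_j r^j_x)$, while for every $\eta\ge 0$ the feasibility of $s^\ast$ in the defining maximum gives $\mathsf{G}(\relx{x}+\eta r^j_x)\ge s^\ast(\relx{x}+\eta r^j_x)$. Substituting into $\zeta^j$ and rearranging around $\eta_j$ yields the supergradient inequality $\zeta^j(\eta)\le\zeta^j(\eta_j)+(\ell r^j_t-s^\ast r^j_x)(\eta-\eta_j)$, which is precisely $\ell r^j_t-s^\ast r^j_x\in\partial\zeta^j(\eta_j)$. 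The final monotonicity $\partial\zeta^j(\eta_j)\le\partial\zeta^j(\eta^\ast_j)$ for $\eta_j>\eta^\ast_j$ is then immediate from the general fact that supergradients of a concave univariate function are non-increasing in the argument, combined with piece-wise linearity so that the comparison is well defined.

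The delicate point I anticipate is the uniqueness step. A concave piece-wise linear function with $\zeta^j(0)>0$ could in principle possess a flat zero segment yielding several distinct zeros; the argument above excludes this only by exploiting that points immediately to the right of $\eta^\ast_j$ take strictly negative values, which is precisely where finiteness of $\eta^\ast_j$ enters. I would verify carefully that a zero plateau extending to the right of $\eta^\ast_j$ would force $\zeta^j\le 0$ thereafter by concavity, and a zero slope beyond the plateau would propagate $\zeta^j\equiv 0$ to $+\infty$, contradicting $\eta^\ast_j<\infty$; this is the one place where piece-wise linearity and the finiteness hypothesis must be combined rather than invoked in isolation.
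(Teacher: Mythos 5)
Your proposal is correct and follows essentially the same route as the paper's proof: concavity and piecewise linearity of $\zeta^j$ from the max-of-linear-functions structure of $\mathsf{G}$, positivity at $0$ from the interiority of $z^\ast$ in $\cC$, the supergradient identified via the maximizer $s^\ast$, and monotonicity from concavity. Your three-point concavity argument for uniqueness and your direct verification of the supergradient inequality are in fact more explicit than the paper's terse appeals to the boundary of the closed convex set $\cC$ and to the chain rule, so no gap remains.
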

\begin{proof}
Since  the extended envelope $\mathsf{G}$ is the maximum of linear functions, it is convex and piece-wise linear, so  $\zeta^j$ is concave and piece-wise linear. Since $\zeta^j(0) = \ell \relx{t} - \mathsf{G}(\relx{x})$, it follows from the assumption $z^\ast \in \inter(\cC)$ that $  \ell \relx{t} >   \mathsf{G}(\relx{x})$ and thus  $\zeta^j(0) > 0$. Since $\cC$ is closed and convex, $\eta'_j = \eta^\ast_j$ if and only if $z^\ast + \eta'_j r^j  \in \bd(\cC)$. That is $\mathsf{G}(r^j_x \eta_j + \relx{x}) = \mathsf{G}(\relx{x}) + r^j_t \eta'_j $, i.e., $\zeta^j(\eta'_j) = 0$. Since $s^\ast \in \partial {\mathsf{G}}(\relx{x}+  r^j_x \eta_j)$, by the chain rule, $\ell r^j_t - s^\ast r^j_x$ is a subgradient of $\zeta^j$. By the concavity of $\zeta^j$, its subgradients are non-increasing.
\end{proof}

 By \Cref{prop.uni}, the line search problem \eqref{eq.supstep} is reduced  into solving a univariate nonlinear equation:
\begin{equation}
\label{eq.nleq}
	\zeta^j(\eta^j) = 0.
\end{equation}
For each ray $r^j$, solving \eqref{eq.nleq} gives the unique zero point of the  univariate function $\zeta^j$, or certificates that no such point exists.

To solve the univariate nonlinear equation \eqref{eq.nleq}, it is natural to deploy a Newton-like algorithm. Therefore, we need the value and (sub)gradient information of $\zeta^j$. Moreover, the  computation of $\zeta^j$ can be reduced to the  computation of $\mathsf{G}$.  A sorting algorithm can compute the value and subgradients of $\mathsf{G}$ (see \Cref{prop.out}). This means that one can compute $\zeta^j$ in a strongly polynomial time.  

Previous works \cite{ChmielaMunozSerrano2023,xusignomial} use the bisection algorithm, which guarantees finding the zero point within a given tolerance. Our implementation   is similar to the \textit{discrete Newton  algorithm} in \cite{Goemans}, but is combined with the bisection algorithm, so we call our implementation the \textit{hybrid discrete Newton  algorithm}. The bisection algorithm helps find a starting point for the Newton algorithm. Thanks to the piece-wise linearity of the univariate function $\zeta^j$, our algorithm can find an exact zero point in a finite time.

\begin{algorithm}[htbp]
 \textbf{Input:} The univariate function $\zeta^j$, (scalar) starting  point $\Delta > 0$ (default: 0.2), a numeric $\eta_{\infty}$ representing $+\infty$, and the maximum number $I$ of search steps (default: 500)\;
 \textbf{Output:}  $\eta_j > 0$ such that $\zeta^j(\eta_j) = 0$\;
 Let $i = 0$, $\eta_{j}  =\Delta$\;
 \uIf{$\zeta^j(\eta_\infty) > 0$}{
    $\eta_j = \eta_\infty$\Comment*[r]{safeguard}
 }
 \Else{
 \While{$i < I$}
 {
Let $ s^\ast \in \argmax_{s \in \ext(\ep{g})} s (\relx{x}+ r^j_x \eta_j)$\;
Compute a subgradient $\beta =r^j_t - s^\ast r^j_x$\;
\uIf{$\zeta^{j}(\eta_j) = 0$}{
\Break\;
}
\ElseIf{$ \beta < 0$}{
$\eta_j = \eta_j - \frac{\zeta^j(\eta_j)}{\beta}$ \Comment*[r]{Newton step}\label{algo:newton.step}
}
\Else{
$\eta_j = 2 \eta_j$\Comment*[r]{bisection step}\label{algo:bis.step}
}
$i = i +1$\;
 }}
\caption{Hybrid discrete Newton algorithm}
\label{algo.newton}
\end{algorithm}

\begin{proposition}
\label{prop.converg}
The hybrid discrete Newton algorithm terminates in a finite number of steps and finds the zero point $\eta^\ast_j$.
\end{proposition}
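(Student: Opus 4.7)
\emph{Proof proposal.} The plan is to treat the safeguard branch separately, and then split the main loop into a doubling subphase (line~\ref{algo:bis.step}) and a Newton subphase (line~\ref{algo:newton.step}), using the piecewise linear concave structure of $\zeta^j$ from \Cref{prop.uni} to bound each phase.

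If $\zeta^j(\eta_\infty)>0$, concavity together with $\zeta^j(0)>0$ implies $\zeta^j>0$ on the whole interval $[0,\eta_\infty]$, so $\eta^\ast_j$ exceeds the numeric infinity and the algorithm correctly returns $\eta_\infty$ in one step. Otherwise $\zeta^j(\eta_\infty)\le 0$, and by \Cref{prop.uni} there is a unique $\eta^\ast_j\in(0,\eta_\infty]$ with $\zeta^j(\eta^\ast_j)=0$ and $\zeta^j$ strictly decreasing past that point. For the doubling subphase, I would argue that, by concavity, the pieces on which $\beta\ge 0$ all lie in an initial interval $[0,\bar\eta]$ with $\bar\eta\le\eta^\ast_j$. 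Since $\zeta^j$ has only finitely many linear pieces (at most $|\ext(\ep{g})|\le n!$) and the geometric growth $\eta_j\leftarrow 2\eta_j$ escapes any bounded initial interval in $O(\log(\bar\eta/\Delta))$ steps, the doubling subphase terminates after finitely many iterations and hands over to the Newton subphase with $\beta<0$.

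For the Newton subphase I would use two observations. First, because the tangent of a concave function overestimates it, the Newton update $\eta_{\text{new}}=\eta_j-\zeta^j(\eta_j)/\beta$ satisfies $\zeta^j(\eta_{\text{new}})\le 0$ and $\eta_{\text{new}}\ge\eta^\ast_j$ for every iterate with $\beta<0$, while the termination test $\zeta^j(\eta_j)=0$ catches the case of equality. Second, if a Newton step does not terminate, then $\eta_{\text{new}}$ must lie in a different linear piece of $\zeta^j$: otherwise $\eta_{\text{new}}$ would be the exact root of the current piece, yielding $\zeta^j(\eta_{\text{new}})=0$, a contradiction. Writing $\zeta^j=\min_{s\in\ext(\ep{g})}(\alpha_s+\beta_s\eta)$, any two active pieces must carry distinct slopes (two equal-slope pieces would leave one dominated everywhere). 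Combined with the fact that after the first Newton step all iterates lie in $[\eta^\ast_j,\infty)$ and form a monotonically decreasing sequence (since $\zeta^j<0$ and $\beta<0$ force each Newton update to go leftward), the corresponding piece-slopes must strictly increase across iterations, so each piece is visited at most once. Because $\ext(\ep{g})$ is finite, the Newton subphase terminates in finitely many iterations, and the final $\eta_j$ satisfies $\zeta^j(\eta_j)=0$, which by uniqueness equals $\eta^\ast_j$.

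The main obstacle is the second observation: rigorously proving that the Newton iterates form a monotone, non-repeating sequence of pieces. The key is to combine the concavity-based overestimation (which gives $\eta_{\text{new}}\ge\eta^\ast_j$ after the first Newton step) with the distinctness of slopes among active pieces of the piecewise linear minimum $\zeta^j$, so that leftward monotonicity of iterates forces strict monotonicity of slopes and rules out any piece revisit.
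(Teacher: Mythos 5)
Your proof is correct and takes essentially the same route as the paper's: a logarithmically bounded doubling phase, followed by a Newton phase in which concavity forces every non-terminating step to land on a new linear piece of the piecewise linear function $\zeta^j$, so finiteness of the pieces yields termination. You additionally handle the safeguard branch explicitly and spell out the monotonicity of the Newton iterates, which the paper's proof only asserts via the monotone-decreasing ``algorithmic gradient'' and the remark following the proof.
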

\begin{proof}

For all $\eta \in \bR_+$, we assume that  \Cref{algo.newton} chooses and computes a unique subgradient $\beta$ at $\eta_j$,  we denote it  $\nabla \zeta^j(\eta_j)$, and call it algorithmic gradient.
The concavity of $\zeta^j$ implies that its algorithmic gradient is monotone-decreasing w.r.t. $\eta_j$. There is a threshold $\eta'_j \ge 0$ such that, for all $\eta_j \in [0, \eta'_j)$, the algorithmic gradient $\nabla\zeta^j(\eta_j) > 0$; for all $\eta_j \in  [\eta'_j, +\infty]$ (called the Newton step region), the algorithmic gradient $\nabla\zeta^j(\eta_j) \le  0$.

After a finite number of bisection steps (at most $\lceil \log(\eta'_j / \Delta)\rceil$), the algorithm enters the Newton step region $[\eta'_j, +\infty]$, where the algorithmic gradient is always negative.
Then, we prove that the algorithmic gradient $\nabla\zeta^j(\eta_j)$ at step $i$ is different from that at step $i-1$, and the algorithm stays in the  Newton step region. Since $\zeta^j$ is piece-wise linear (the number of its distinct algorithmic gradients is finite),  the algorithm must terminate in a finite number of steps.

If at step $i-1$, $\zeta^j(\eta_j -  \frac{\zeta^j(\eta_j)}{\nabla\zeta^j(\eta_j)}) = 0 $, then the algorithm terminates at this step and finds the zero point. If at step $i-1$, $\zeta^j(\eta_j - \frac{\zeta^j(\eta_j)}{\nabla\zeta^j(\eta_j)}) < 0 $, then we prove that $\nabla \zeta^j(\eta_j - \frac{\zeta^j(\eta_j)}{\nabla\zeta^j(\eta_j)}) \ne \nabla \zeta^j(\eta_j)$ and $\nabla \zeta^j(\eta_j - \frac{\zeta^j(\eta_j)}{\nabla\zeta^j(\eta_j)}) \le 0$.

First, assume, to aim at a contradiction, that $\nabla \zeta^j(\eta_j - \frac{\zeta^j(\eta_j)}{\nabla\zeta^j(\eta_j)}) = \nabla \zeta^j(\eta_j)$.  Knowing that the algorithmic gradient is  monotone-decreasing, the piece-wise linearity of  $\zeta^j$ implies that this algorithmic gradient is constant in the range $[\eta_j -  \frac{\zeta^j(\eta_j)}{\nabla\zeta^j(\eta_j)}, \eta_j]$. It follows that for all $\delta \in [0, \frac{\zeta^j(\eta_j)}{\nabla\zeta^j(\eta_j)}]$, $\zeta^j(\eta_j - \delta) = \zeta^j(\eta_j) - \delta   \nabla\zeta^j(\eta_j)$. Hence, $\zeta^j(\eta_j - \frac{\zeta^j(\eta_j)}{\nabla\zeta^j(\eta_j)}) = 0$, which leads to a contradiction.

Second, we show that $\nabla \zeta^j(\eta_j - \frac{\zeta^j(\eta_j)}{\nabla\zeta^j(\eta_j)}) \le 0$. When $\frac{\zeta^j(\eta_j)}{\nabla\zeta^j(\eta_j)} \le 0$, by the mononcity of $\nabla \zeta^j$, $\nabla \zeta^j(\eta_j - \frac{\zeta^j(\eta_j)}{\nabla\zeta^j(\eta_j)}) \le \nabla \zeta^j(\eta_j) < 0$. When $\frac{\zeta^j(\eta_j)}{\nabla\zeta^j(\eta_j)} > 0$, as by assumption that $\nabla\zeta^j(\eta_j) < 0$, $\zeta^j(\eta_j)$ must be negative. Then, by the concavity of $\zeta^j$, $\zeta^j(\eta_j - \frac{\zeta^j(\eta_j)}{\nabla\zeta^j(\eta_j)}) \le \zeta^j(\eta_j) - \nabla\zeta^j(\eta_j)\frac{\zeta^j(\eta_j)}{\nabla\zeta^j(\eta_j)} = 0$. This implies that $\nabla \zeta^j(\eta_j - \frac{\zeta^j(\eta_j)}{\nabla\zeta^j(\eta_j)}) \le 0$.

\end{proof}

From \Cref{prop.converg},  the hybrid discrete Newton algorithm first executes bisection steps with increasing $\eta_j$ and $\zeta^j(\eta_j)$. Then it enters into the Newton step region. After a single Newton step, $\zeta^j(\eta_j)$ becomes negative, and then monotonically
increases to zero in a finite number of steps. The discrete Newton algorithm in \cite{Goemans} is applied to the line search problem for submodular polyhedra, which are polars of extended polymatroids. The algorithm runs in a strongly polynomial time. In our case, $\cC$ includes the extended polymatroid and is unbounded. The corresponding line search problem may have no solutions, and this is a usual case in intersection cut computation \cite{chmiela2022implementation}. Therefore, \Cref{algo.newton} needs a safeguard step, where we evaluate $\zeta^j$ at an user-defined infinity. One may also prove that \Cref{algo.newton} runs in a strongly polynomial time, but a careful analysis for the unbounded case is needed.
Owing to the limitation of pages, we do not expand this topic here.

\section{Computational results}
\label{sec.cresult}
In this section, we conduct computational experiments to test the proposed cuts.

\textbf{Setup and performance metrics.}  The experiments are conducted on a server with  Intel Xeon W-2245 CPU @ 3.90GHz and 126GB main memory. We use \texttt{SCIP} 8.0 \cite{bestuzheva2021scip} as a MINLP framework to solve the natural  formulations of test problems.  \texttt{SCIP} is equipped with \texttt{CPLEX} 22.1 as an LP solver, and \texttt{IPOPT} 3.14 as an NLP solver.

By \Cref{thm.bin}, the simple lifted split $H_j := \{x \in \bR^n: 0 \le x_j \le 1\} \times \bR$ is a maximal $\hyp_{\cB}(f)$-free set,  where the splitting variable $x_j$ is chosen as the most fractional entry of the relaxation solution. In the \textit{standalone} (resp. the \textit{embedded}) configuration, we deactivate (resp. activate)  \texttt{SCIP}'s internal cut separators. Under each configuration, the \textit{submodular cut} (resp. the \textit{split cut}) setting  adds intersection cuts derived from  $\ee{f}$ (resp. $H_j$), and the \textit{default} setting does not add any intersection cuts.

 We focus on  the root node performance and measure the \textit{closed root gap}. Let $d_1$ be the value of the first LP relaxation (without cuts added), let $d_2$ be the dual
bound  after all the cuts are added, and  let $p$ be a reference primal bound. The closed root gap $(d_2-d_1) /(p-d_1)$
 is the closed gap improvement of $d_2$ with
respect to $d_1$.  We also record the number of added cuts, the relative improvement to the default setting, and the total running time. For each configuration and setting,  we compute these statistics' shifted geometric mean (shift value: 1) within a test problem benchmark.

\textbf{Experiment 1:} \maxcut. Consider an undirected graph $G=(V,E,w)$, where $V$ is the set of nodes, $E$ is the set of edges, and $w$ is a weight function over $E$. For a subset $S$ of $V$, its associated cut capacity  is  the sum of the weights of edges with one adjacent node in $S$ and the other in $V \smallsetminus S$. The \maxcut problem aims at finding a subset $S \subseteq V$ with  maximum cut capacity. Using a  binary variable vector $x \in \cB$ indicating whether vertices belong to $S$,  then the problem can be formulated as the following   quadratic unconstrained binary optimization (QUBO) problem: $$
    \max_{x \in \cB}  \sum_{\{i, j\} \in E}w_{ij} ((1-x_i)x_j + x_i(1-x_j)).$$
 When $w$ is nonnegative, the cut capacity function (the objective function) is  submodular. Our benchmark contains 30 ``g05'' and 30 ``pw'' instances with nonnegative weights from Biq Mac  \cite{wiegele2007biq}. The reference primal bounds are also from Biq Mac. The number of vertices is up to 100, and the number of edges is up to 4455. We encode the hypograph reformulation \eqref{eq.milp} of the QUBO. \texttt{SCIP} will automatically reformulate the problem into a MILP via the reformulation-linearization technique  (RLT) \cite{adams1986tight}. This MILP formulation is a special case of the extended formulation \eqref{ref.dopt} of a degree-2 BMP with $m = 0$.

 In \Cref{alpha}, we report the computational results, where ``closed'' denotes the average closed root gap, and ``relative'' denotes its relative value to the default setting. For the standalone (resp. the embedded) configuration, the relative improvement of submodular cuts is $340\%$ (resp. $22\%$) compared to $193\%$  (resp. $21\%$) of split cuts. In the standalone configuration, we can compare the ``clean'' strengths of intersection cuts derived from different hypograph-free sets. Although split cuts are derived from maximal hypograph-free sets and submodular cuts are derived from non-maximal ones, the performance of  split cuts is worse.
 
 We observe that fewer split cuts are generated than submodular cuts. This means that the efficiency of some split cuts does not satisfy \texttt{SCIP}'s internal criteria, so \texttt{SCIP} abandons more split cuts  than submodular cuts. As two types of cuts are derived using the same principle but from different hypograph-free sets,  the distances between the relaxation points to the boundary of hypograph-free sets determine the cut efficiency. This observation suggests that relaxation points are further to the boundary of the extended envelope epigraph than to the splits. Under the embedded configuration, the difference in relative improvements between the two types of cuts is $1\%$, so they perform almost equally. However, the separation time of split cuts is shorter than that of submodular cuts. This is because separating submodular cuts requires solving  nonlinear equations, while the split cuts can be computed  in a closed form.

\begin{table} [htbp]
\centering
\scalebox{0.99}{
\begin{tabular}{c|cr|cccr|cccr}
\toprule
\multirow{2}{*}{\texttt{Configuration}} &
\multicolumn{2}{c|}{\texttt{Default}}    &
\multicolumn{4}{c|}{\texttt{Submodular cut}} &
\multicolumn{4}{c}{\texttt{Split cut}} 	 \\
 & closed & time & closed & relative & time & cuts & closed & relative & time & cuts\\
\hline
standalone & 0.04 & 5.13 & 0.16 & 4.40 & 85.40 & 207.59 & 0.12  & 2.93 & 17.92 & 92.53\\
embedded& 0.22 & 12.62 & 0.27 & 1.22 & 104.02 & 70.68 & 0.27  & 1.21 & 34.62 & 45.15\\
\bottomrule
\end{tabular}}
\caption{Summary of \maxcut results}\label{alpha}
\end{table}

\textbf{Experiment 2:} \pbm.
As  mentioned, \pbm is a MUBO problem, a generalization of QUBO. We can use techniques from \Cref{sec.ss} to generate intersection cuts.

 Our benchmark contains 44 highly dense ``autocorr\_bern'' MUBO instances from MINLPLib \cite{minlplib,Vigerske2022Feb}.
These instances arise in theoretical physics, and the problem is to minimize a degree-four polynomial energy function. The problem is a degree-4 BMP with $m = 0$. \texttt{SCIP}  constructs the extended formulation \eqref{ref.dopt}. The benchmark contains instances with up to 60 binary variables and 3540  Boolean multilinear terms. We use the best-known primal bound from MINLPLib as the reference primal bound.  

In \Cref{beta}, we report the computational results. For the standalone (resp. the embedded) configuration, the relative improvement of submodular cuts is $381\%$ (resp. $13\%$) compared to $131\%$  (resp. $1\%$) of split cuts. In both configurations, the submodular cuts are better than the split cuts in terms of the closed root gap. Moreover, under the embedded configuration, the difference in the relative improvements between the two types of cuts is  around $10\%$. This is larger than $1\%$ of \maxcut benchmark under the same configuration. This divergence between degree-2 and degree-4 MUBO suggests that the submodular cuts are suitable for high-degree Boolean multilinear constraints.

We recall that to solve the nonlinear equations, the hybrid discrete Newton algorithm needs oracle access to the value of the Boolean multilinear function. For some instances, a Boolean multilinear function may consist of thousands of multilinear terms. After  a code timing analysis, we find that the  separation of submodular cuts spends the most  time computing the function value. Therefore, this is the main time performance bottleneck, which needs to be  optimized in the future. An counterintuitive finding is that  non-maximal $\cS$-free sets may yield stronger cuts. This because the geometrical relation between the $\cS$-free sets and corner polyhedron matters.
 
\begin{table} [htbp]
\centering
\scalebox{0.99}{
\begin{tabular}{c|cr|cccr|cccr}
\toprule
\multirow{2}{*}{\texttt{Configuration}} &
\multicolumn{2}{c|}{\texttt{Default}}    &
\multicolumn{4}{c|}{\texttt{Submodular cut}} &
\multicolumn{4}{c}{\texttt{Split cut}} 	 \\
 & closed & time & closed & relative & time & cuts & closed & relative & time & cuts\\
\hline
standalone & 0.01 & 9.49 & 0.05 & 4.81&  43.54  & 43.17  & 0.03 & 2.31 & 14.64 & 20.94\\
embedded& 0.105 & 22.52 & 0.11 & 1.13 & 49.61 & 13.80 & 0.106  & 1.01 & 25.58 & 28.21\\
\bottomrule
\end{tabular}}
\caption{Summary of  \pbm}\label{beta}
\end{table}

\textbf{Experiment 3:} \bdopt.
As mentioned, the \bdopt problem has a submodular maximization form \eqref{eq.optimal2}. In particular, we can encode it as an extended formulation \eqref{ref.dopt} in \texttt{SCIP}. \texttt{SCIP} generates gradient cuts for this convex MINLP. Therefore, we can obtain LP relaxations and corner polyhedra.

Our benchmark consists of two classes of instances. We let  parameters $M_j \in \bR^{m \times 1}$ be single-column matrices. The first class of instances are block design problems \cite{sagnol2015computing}, where $M_j$ are sparse 0-1 matrices. The exact designs correspond to the graphs with a given number of edges and nodes that have a maximum number of spanning trees. Recall that $n$ is the variable dimension, $m$ is the matrix dimension, and $k$ is the cardinality. We generate 15 block design instances with $(n,m,k) \in \{(45,10,9),(55,11,10),(66,12,11)\}$.
The  second class of instances are random Gaussian instances, where $M_j$ are dense real matrices. The entries of  matrices $M_j$ are drawn from a Gaussian distribution with zero mean and $1/\sqrt{n}$ variance. We generate 30 random Gaussian instances with $(n,m) \in \{(50,20),(50,30),(60,24),(60,36),(70,28),(70,42)\}$ and $k \in \{m,m+1,m+2,m+3,m+4\}$. We set the regularization constant $\epsilon $ to $  1e-6$. We use the best primal bound from all settings as the reference primal bound.  Since \texttt{SCIP}'s internal gradient cuts are important for linearizing convex nonlinear constraints, we  keep the gradient cuts but disable all integer-oriented cuts (GMI cuts and mixed-integer rounding cuts etc.) in the standalone configuration.

In \Cref{gamma}, we report the computational results. We divide the results of block design and Gaussian random instances, since the density of  matrices are different. Looking at the default setting in different benchmarks, there is  no difference between the standalone and embedded configurations in terms of the closed root gap. This means that integer-oriented cuts do not improve the root node LP relaxations. We see the same problem for intersection cuts, which do not close the root gap but increase the computing time.  In particular, the number of separated cuts is around one. Thereby, many intersection cuts are too weak to add in the cut pool.

We recall that  intersection cuts and many integer-oriented cuts are LP-based cuts, \ie derived from an LP relaxation of  the extended formulation \eqref{ref.dopt}. Therefore, their strengths depend on the LP relaxation.  A flat corner polyhedron, which usually arises from an LP relaxation with many constraints, may yield weak intersection cuts. Based on types of MINLPs, there are two basic ways to construct initial LP relaxations.  For nonconvex MINLPs, one way usually uses the factorable programming and term-wise envelopes \cite{mccormick1976computability}. Notable examples are Boolean multilinear constraints and  continuous quadratic constraints \cite{munoz2022maximal}.  The McCormick envelopes or Boolean linearization techniques are used to construct their LP relaxations, which have a finite number of constraints.  For convex MINLPs, the other way linearizes nonlinear constraints, and the number of constraints in the LP relaxation can grow to infinite. This is because  a convex nonlinear constraint is equivalent to an infinite number of linear constraints. Since \texttt{SCIP} may add many gradient cuts for approximating the convex MINLP \eqref{ref.dopt}, this yields  flat corner polyhedrons and weak intersection cuts. In summary, the weakness of intersection cuts is due to the flatness of the corner polyhedron.

\begin{table} [htbp]
\centering
\scalebox{0.85}{
\begin{tabular}{c|c|cr|cccr|cccr}
\toprule
\multirow{2}{*}{\texttt{Benchmark}} & \multirow{2}{*}{\texttt{Configuration}} &
\multicolumn{2}{c|}{\texttt{Default}}    &
\multicolumn{4}{c|}{\texttt{Submodular cut}} &
\multicolumn{4}{c}{\texttt{Split cut}} 	 \\
 & & closed & time & closed & relative & time & cuts & closed & relative & time & cuts\\
\hline
 \multirow{2}{*}{\texttt{Block design}} & standalone &  0.59 & 20.46 & 0.59 & 1.0 & 18.71 & 1.84 & 0.59 & 1.0 & 11.62 & 1.77  \\
  &  embedded & 0.59 & 21.44 & 0.59 & 1.0 & 19.0 & 1.84 & 0.59 & 1.0 & 12.41 & 1.77 \\
  \hline
 \multirow{2}{*}{\texttt{Gaussian}}  &  standalone  &  0.83 & 213.13 & 0.83 & 1.0 & 415.07 & 1.45 & 0.83 & 1.0 & 214.17 & 1.45 \\
& embedded  &  0.83 & 214.77 & 0.83 & 1.0 & 426.33 & 1.45 & 0.83 & 1.0 & 214.14 & 1.45 \\
\hline
 \multirow{2}{*}{\texttt{All}}  &  standalone  &   0.75 & 98.47 & 0.75 & 1.0 & 149.54 & 1.57 & 0.75 & 1.0 & 82.6 & 1.55 \\
& embedded  &   0.75 & 100.47 & 0.75 & 1.0 & 153.01 & 1.57 & 0.75 & 1.0 & 84.31 & 1.55 \\
\bottomrule
\end{tabular}}
\caption{Summary of  \bdopt results}\label{gamma}
\end{table}

\section{Conclusion}\label{sec13}

We construct hypograph-free sets for  submodular functions. Our construction relies on a new continuous extension of submodular functions. We characterize maximal hypograph-free sets, \and generalize our results to sets involving submodular-supermodular functions. These yield intersection cuts for Boolean multilinear constraints. We exploit the submodular structure in an extended formulation of the \dopt problem. We propose a hybrid discrete Newton algorithm that can compute intersection cuts efficiently and exactly. The computational results show that intersection cuts derived from the submodularity are stronger than those derived from  split cuts for \maxcut and \pbm problems. For convex MINLPs, our computational results on the \bdopt problem suggest that  corner polyhedra can be flat, which makes intersection cuts weak.


\section*{Statements and Declarations}
Non  conflicts of interest with the journal or the funding agencies.


\bibliographystyle{plain}

\begin{CJK*}{UTF8}{bsmi}
\bibliography{reference}
\end{CJK*}

\end{document}